\newtheorem{theorem}{Theorem}[section]
\newtheorem{proposition}[theorem]{Proposition}
\newtheorem{corollary}[theorem]{Corollary}
\newtheorem{lemma}[theorem]{Lemma}
\theoremstyle{remark}
\newtheorem{definition}[theorem]{Definition}
\newtheorem{example}[theorem]{Example}
\newtheorem{remark}[theorem]{Remark}
\newcommand{\R}{{\mathbb R}}
\newcommand{\pfc}{{\mathcal F_c}}
\DeclareMathOperator{\med}{med}
\DeclareMathOperator{\Med}{Med}
\begin{document}
	
	\begin{frontmatter}
		\title{Statistical depth and support medians for fuzzy data}
		\runtitle{Fuzzy depth median}
		
		\begin{aug}
			\author[A]{\fnms{LUIS} \snm{GONZ\'ALEZ-DE LA FUENTE}\ead[label=e1]{gdelafuentel@unican.es}},
			\author[A]{\fnms{ALICIA} \snm{NIETO-REYES}\ead[label=e2]{alicia.nieto@unican.es}\thanks{Corresponding author, ORCID: 0000-0002-0268-3322}}
			\and
			\author[B]{\fnms{PEDRO} \snm{TER\'AN}\ead[label=e3]{teranpedro@uniovi.es}}
			\address[A]{Departamento de Matem\'aticas, Estad\'istica y Computaci\'on,
				Universidad de Cantabria, Spain
			}
			
			\address[B]{Universidad de Oviedo, Spain
			}
		\end{aug}
		
		\begin{abstract}
					Statistical depth functions order the elements of a space with respect to their centrality in a probability distribution or dataset. Since many depth functions are maximized in the real line by the median, they provide a natural approach to defining median-like location estimators for more general types of data (in our case, fuzzy data). We analyze the relationships between depth-based medians, medians based on the support function, and some notions of a median for fuzzy data in the literature. We take advantage of specific depth functions for fuzzy data defined in our former papers: adaptations of Tukey depth, simplicial depth, $L^1$-depth and projection depth.
       	 \end{abstract}
		
		\if0
		...........
		\begin{keyword}[class=MSC2010]
			\kwd[Primary ]{94D05}
			\kwd{62G99}
			\kwd[; secondary ]{62G30}
		\end{keyword}
		..........
		\fi
		
		\begin{keyword}
			\kwd{Median}
			\kwd{Fuzzy random variable}
			\kwd{Nonparametric statistics}
			\kwd{Statistical depth}
			\kwd{Fuzzy data}
		\end{keyword}
		
	\end{frontmatter}
	
	\section{Introduction}
	
	Statistical depth functions order the elements of a space with respect to a distribution on the space, and, specifically, the elements of a dataset. This order is trivial in dimension one, providing the concept of quantiles and,  in particular, of median.  However,  it is not trivial in higher dimensions.
	A way to see it  is that there exist multiple manners of extending the idea of median, and other quantiles, to spaces of dimension larger than one while satisfying certain desirable properties.
	These desirable properties are gathered in  axiomatic notions of statistical depth, see \cite{ZuoSerfling} for multivariate spaces,  \cite{NietoBattey} for functional, metric, spaces and \cite{primerarticulo} for spaces of fuzzy sets. Each notion takes into account the particular characteristics of the underlying space. Examples of depth functions are: Tukey depth or halfspace depth \cite{tukey}, random Tukey depth \cite{randomTukey} and simplicial depth \cite{LiuSimplicial} in multivariate spaces, spatial depth \cite{indiosespacial}, metric depth \cite{metricdepth} and lens metric depth \cite{lensmetric} in functional spaces and fuzzy Tukey depth \cite{primerarticulo}, fuzzy simplicial-type depths \cite{simplicialsegundo}, fuzzy projection depth \cite{proyecciontercero} and fuzzy $L^{1}$-type depth \cite{proyecciontercero} in fuzzy spaces.

	A naive but straightforward way of computing quantiles, in, for instance,  multivariate spaces, 
	is the component-wise quantiles, and, in particular, the component-wise median. However, this has some undesired properties: (I)  The coordinate-wise median of a dataset does not necessarily   belong to the convex hull of the dataset \cite{Cabrera}. (II) When computed with respect to the vertices of an equilateral triangle and its center of mass, the component-wise median depends on the coordinate axis and it is not necessarily the center of mass \cite{Rafa}. This undesired behavior of naive strategies has resulted in that lately the median induced by the different depth functions is used in the literature to generalized the, one-dimensional, median. Examples are \cite{medianM1}, \cite{medianM2} or \cite{medianM3} in multivariate spaces and  \cite{LopezRomoBand}, \cite{medianF1} or \cite{medianF2} in functional spaces. This paper is dedicated to propose and study, among others, the medians induced by the existing fuzzy depth functions.

 In the fuzzy setting, the literature contains  already some notions of median and quantile functions of fuzzy random variables, which are not induced in their conceptualization by stiatitical fuzzy depth functions.
 Each of them is defined for the specific case of $\mathcal{F}_{c}(\mathbb{R})$. In \cite{GregorMedian}, Grzegorzewski proposes the definition of a median of a fuzzy random variable based on the Zadeh's extension principle. More recently, Sinova et al. \cite{medianfuzzy1} propose the notion of median of a fuzzy random variable (the 1-median) from an $L^1$-minimization principle.
 Also Shvedov in \cite{ShvedovMediana} proposes the definition of a quantile function of a fuzzy random variable, which assigns to every $p\in (0,1)$ a fuzzy set. In that setting, if we take $p = 1/2$, we obtain the median of the fuzzy random variable. 
 In the present paper we focus our study to the concepts of medians proposed by \citet{medianfuzzy1} as a minimization problem and by Grzegorzewski \cite{GregorMedian} using the Zadeh's extension principle and connect them with our %
 proposals. 
 This is due to these proposal  are more consistent with the fuzzy theory than the median in \cite{ShvedovMediana}, which is a generalization of the univariate case.

	The paper is organized as follows. Section \ref{preliminaries} contains the notation and background, on fuzzy sets and depth, necessary for a comprehensive understanding of the paper. In Section \ref{support} we present two definitions of fuzzy medians, one based on   support functions and the other  on maximizers of depth functions. 
	A study about the relation between them and with  the existing notions is pursued in Sections \ref{support} and \ref{max}. All proofs and further results are contained in Section \ref{proofs}. A discussion is in Section \ref{discussion}.

	\section{Preliminaries}\label{preliminaries}
	
	The following notation is used throughout.
	We denote by $\mathcal{K}_{c}(\mathbb{R}^{p})$ the set of all non-empty, compact and convex subsets of $\mathbb{R}^{p}$. A fuzzy set on $\mathbb{R}^p$ is any function  $A: \mathbb{R}^{p}\rightarrow [0,1]$. The $\alpha$-levels, or $\alpha$-cuts, of a fuzzy set $A$ are defined as the sets $A_\alpha = \{x\in\mathbb{R}^{p}: A(x)\geq\alpha\}$ with $\alpha\in (0,1]$ and $A_0 = \text{clo}(\{x\in\mathbb{R}^{p}: A(x)>0\})$, where $\text{clo}(\cdot)$ denotes the closure of a set. The set of fuzzy sets whose $\alpha$-levels are elements of $\mathcal{K}_{c}(\mathbb{R}^{p})$ is denoted by $\mathcal{F}_{c}(\mathbb{R}^{p})$. Abusing of nomenclature, we refer to the elements of $\mathcal{F}_{c}(\mathbb{R}^{p})$ as fuzzy sets.

	Ordinary sets, $K\in\mathcal{K}_{c}(\mathbb{R}^{p})$, can be identified with its \textit{indicator function}, $\text{I}_{K} : \mathbb{R}^{p}\rightarrow\mathbb{R}$, where $\text{I}_{K}(x) = 1$ if $x\in K$ and $\text{I}_{K}(x) = 0$ otherwise. The sets of the form $\text{I}_{K}$ are called \textit{crisps sets}. Thus, abusing again of the notation, we consider that $\mathcal{K}_{c}(\mathbb{R}^{p})\subseteq\mathcal{F}_{c}(\mathbb{R}^{p})$.
	
	Let us denote by $\mathbb{S}^{p-1} := \{x\in\mathbb{R}^{p}: \|x\|\leq 1 \}$ the unit sphere on $\mathbb{R}^{p}$, with $\|.\|$ the euclidean norm on $\mathbb{R}^{p}$ and by $\langle \cdot ,\cdot\rangle$ the usual inner product on $\mathbb{R}^{p}$. The mapping $s_{A}: \mathbb{S}^{p-1}\times [0,1]\rightarrow\mathbb{R}$ defined as
	$
	s_{A}(u,\alpha) = \sup\{\langle u,v\rangle : v\in A_{\alpha}\}
	$
	for every $u\in\mathbb{S}^{p-1}$ and $\alpha\in [0,1]$ is known as the \textit{support function} of the fuzzy set $A\in\mathcal{F}_{c}(\mathbb{R}^{p})$.

	It is common to consider \textit{triangular fuzzy sets} in the case of $\mathcal{F}_{c}(\mathbb{R})$ (see \cite[Section 4.1]{Klir}), also known as triangular fuzzy numbers. %
	Let $a, b, c \in\mathbb{R}$ with  $a\leq b\leq c,$ the triangular fuzzy number $T(a,b,c)$ is given by
	
	\begin{equation*}\label{triangularfuzzyset}
		T(a,b,c)(x) := \left\{ \begin{array}{lcr}
			\cfrac{x - a}{b-a}  & \text{ if } x\in [a,b] \text{ and } a < b, 
			\\ \cfrac{x - c}{b-c}  &  \text{ if } x\in [b,c] \text{ and } b < c 
			\\ 1 & \text{ if } x\in [a,b] \text{ and } a = b \text{ or } x\in [b,c] \text{ and } b = x, 
			\\ 0 & \text{otherwise.}
		\end{array}
		\right.
	\end{equation*}

\subsection{Zadeh's extension principle}\label{Aarith}
It is possible to define new fuzzy sets from non-fuzzy/crisp functions acting over fuzzy sets, using the Zadeh's extension principle \cite{zadehextension}. Given $A\in\mathcal{F}_{c}(\mathbb{R}^{p})$ and a function $f : \mathbb{R}^{p}\rightarrow\mathbb{R}^{p},$  the image $f(A)$ is  defined to be
$f(A)(t) = \sup\{A(y) : y\in\mathbb{R}^{p}, f(y) = t \}$
for all $t\in\mathbb{R}^{p}$.
If we consider an arbitrary $f$, it could happen that the set $f(A)$ does not belong to $\mathcal{F}_{c}(\mathbb{R}^{p})$. Thus we only consider continuous functions.

\subsection{Metrics in the fuzzy setting}
\label{MFS}
It is possible to endow $\mathcal{F}_{c}(\mathbb{R}^{p})$ of a metric space structure, using for instance
the well-known %
familiy of $\rho_{r}$ metrics, which are $L^{r}$-type metrics (see \cite{diamondkloden}). The $\rho_{r}$ metric for any $A,B\in\mathcal{F}_{c}(\mathbb{R}^{p})$ is defined as
\begin{equation}\label{rhor}
	\rho_{r}(A,B) := \left(\int_{\mathbb{S}^{p-1}}\int_{[0,1]}\|s_{A}(u,\alpha)-s_{B}(u,\alpha)\|^{r}d\mathcal{V}^{p}(u)\, d\nu(\alpha)\right)^{1/r},
\end{equation}
where $\nu$ represents the Lebesgue measure and $\mathcal{V}_{p}$ denotes the normalized Haar measure in $\mathbb{S}^{p-1}$. %

\subsection{Fuzzy random variables}\label{Frv}

Let $(\Omega,\mathcal{A},\mathbb{P})$ be a probability space, a random compact set is defined as a function $\Gamma:\Omega\rightarrow\mathcal{K}_{c}(\mathbb{R}^{p})$ such that the set $\{\omega\in\Omega : \Gamma(\omega)\cap K\neq\emptyset \}$ is an element of $\mathcal{A}$, whenever $K\in\mathcal{K}_{c}(\mathbb{R}^{p})$ (see \citet{Mol}). Based on \citet{PuriRalescu} a fuzzy random variable is defined as a function $\mathcal{X}:\Omega\rightarrow\mathcal{F}_{c}(\mathbb{R}^{p})$ such that $\mathcal{X}_{\alpha}(\omega)$ is a random compact set for all $\alpha\in[0,1]$, where $\mathcal{X}_{\alpha}:\Omega\rightarrow\mathcal{P}(\mathbb{R}^{p})$ is defined as the $\alpha$-cut of every image of $\mathcal{X}$, i.e. $\mathcal{X}_{\alpha}(\omega) := \{x\in\mathbb{R}^{p}: \mathcal{X}(\omega)(x)\geq\alpha  \}$ for any $\omega\in\Omega$. In the case of $p = 1$, we refer to the fuzzy random variables as fuzzy random numbers.

The class of fuzzy random variables $\mathcal{X}$  on the measurable space $(\Omega,\mathcal{A})$ such that $\text{E}[\|\mathcal{X}_{0}\|^{r}] < \infty$ whenever $r\in [1,\infty)$ is denoted  by $L^{r}[\mathcal{F}_{c}(\mathbb{R})^{p}]$. Thus, $L^{0}[\mathcal{F}_{c}(\mathbb{R})^{p}]$ is the more general class of fuzzy random variables. 

Given    $\mathcal{X}\in L^{0}[\mathcal{F}_{c}(\mathbb{R})^{p}],$ its support function $s_{\mathcal{X}} : \mathbb{S}^{p-1}\times [0,1]\times\Omega\rightarrow\mathbb{R}$ is defined by
$ 
	s_{\mathcal{X}}(u,\alpha,\omega) := s_{\mathcal{X}(\omega)}(u,\alpha).
$ 

The function $s_{\mathcal{X}}(u,\alpha):\Omega\rightarrow\mathbb{R}$ is a real random variable whenever $u\in\mathbb{S}^{p-1}$ and $\alpha\in [0,1]$ \cite{jointPedroMiriam}. We denote by $F_{u,\alpha}$ the distribution function of $s_{\mathcal{X}}(u,\alpha)$.

\subsection{Depth functions for fuzzy sets}\label{instances}
 
 Let $\mathcal{H}\subseteq L^{0}[\mathcal{F}_{c}(\mathbb{R}^{p})]$ and $\mathcal{J}\subseteq\mathcal{F}_{c}(\mathbb{R}^{p}).$ The following depths are functions 
 of the type $D:\mathcal{J}\times\mathcal{H}\rightarrow [0,\infty)$ evaluated at a fuzzy set $A\in\mathcal{J}$ with respect to a fuzzy random variable $\mathcal{X}\in\mathcal{H}.$
For any $r\in [1,\infty)$,  the \textit{r-natural depth}  \cite{proyecciontercero}  is 
\begin{equation}\label{D_r}
D_{r}(A;\mathcal{X}) = (1 + \text{E}[\rho_{r}(A;\mathcal{X})])^{-1}
\end{equation}	
with $D_{r}(A;\mathcal{X}) = 0$ in case of expectation being infinite.
 The \textit{Tukey depth} \cite{primerarticulo} is defined as
\begin{equation}\label{T}
D_{FT}(A;\mathcal{X}) = \inf_{u\in\mathbb{S}^{p-1}, \alpha\in [0,1]} \min(\mathbb{P}[\omega\in\Omega: \mathcal{X}(\omega)\in S_{u,\alpha}^{-}],\mathbb{P}[\omega\in\Omega: \mathcal{X}(\omega)\in S_{u,\alpha}^{+}]),
\end{equation}
where
		$S_{u,\alpha}^{-} 
		= \{U\in\mathcal{F}_{c}(\mathbb{R}^{p}) : s_{U}(u,\alpha) - s_{A}(u,\alpha)\leq 0\}$ and 
		$S_{u,\alpha}^{+} 
		= \{U\in\mathcal{F}_{c}(\mathbb{R}^{p}) : s_{U}(u,\alpha) - s_{A}(u,\alpha)\geq 0\}.$
	The \textit{projection depth function}  \cite{proyecciontercero} is defined as $D_{FP}(A;\mathcal{X}) = (1 + O(A;\mathcal{X}))^{-1}$, where
\begin{equation}\label{O}
O(A;\mathcal{X}) = \sup_{(u,\alpha)\in\mathbb{S}^{p-1}\times [0,1]}\cfrac{|s_{A}(u,\alpha) - \med(s_{\mathcal{X}}(u,\alpha))|}{\text{MAD}(s_{\mathcal{X}}(u,\alpha))}.
\end{equation}	
 $\med(\cdot)$ denotes the median of a real random variable, with the usual convention of taking the mid-point of the interval of medians when the median is not unique, and $\text{MAD}(\cdot)$ the corresponding median absolute deviation. 
 
Throughout the paper, for both crisp and fuzzy deta we will follow this criterion: medians which are defined so as to be unique will be represented by $\med$ while sets of medians which may potentially include more than one point will be represented by $\Med$. Thus $\Med(X)$ represents the interval $[\underline{\med}(X),\overline{\med}(X)]$ formed by all medians of a random variable $X$. If the median of $X$ is unique, then $\Med(X)=\{\med(X)\}$. This convention will be very useful since several medians for fuzzy data will be considered.

	The \textit{modified simplicial depth}  \cite{simplicialsegundo}  is defined as
\begin{equation}\label{ms}
D_{mS}(A;\mathcal{X}) = \text{E}(\mathcal{V}_{p}\otimes\nu\{(u,\alpha)\in\mathbb{S}^{p-1}\times[0,1] : s_{A}(u,\alpha)\in [m(u,\alpha),M(u,\alpha)]\}),
\end{equation}
where $\mathcal{X}_{1},\ldots ,\mathcal{X}_{p+1}$ are independent and identically distributed random variables distributed as $\mathcal{X}$, $m(u,\alpha) = \min\{\mathcal{X}_{1},\ldots ,\mathcal{X}_{p+1}\}$ and $M(u,\alpha) = \max\{\mathcal{X}_{1},\ldots ,\mathcal{X}_{p+1}\}$.
The \textit{fuzzy simplicial depth}  \cite{simplicialsegundo} is 
\begin{equation}\label{fs}
D_{FS}(A;\mathcal{X}) = \inf_{u\in\mathbb{S}^{p-1}}\text{E}(\nu\{(u,\alpha)\in\mathbb{S}^{p-1}\times[0,1] : s_{A}(u,\alpha)\in [m(u,\alpha),M(u,\alpha)]\}).
\end{equation}

\section{Definition of support medians and depth medians}\label{support}\label{def}

It has long been recognized that a sample of ordered linguistic values modeled by fuzzy sets admits a median (see, e.g., \cite{NguWu}). However, that is just the median of the linguistic values themselves when regarded as an ordinal variable. It is interesting to have ways to compute a median when the fuzzy values in the sample are not obviously ordered or do not reflect an order known in advance; specially, given the ever-increasing list of methods for ranking fuzzy numbers (see, e.g., \cite{BorDeg,Den,Ngu}).

In this section, we introduce two proposals to find plausible candidates for the median of a fuzzy random variable: support medians and depth medians. Later in the section, we present depth medians as the maximizers of a statistical depth function. 
Support medians include the medians defined by \citet{GregorMedian} and \citet{medianfuzzy1}. Thus, we provide these definitions, before introducing the support median notion.

\citet{krusemeyer} introduced the idea of a \textit{fuzzy parameter} of a fuzzy random variable. In that setting, \citet{GregorMedian} defined the median as the \textit{perception of the median} of a fuzzy random variable. Formally, a {\em Grzegorzewski median} of $\mathcal{X}\in L^{0}[\mathcal{F}_{c}(\mathbb{R})]$ is a fuzzy number, $\med_{Gr}(\mathcal{X})$, such that for every $t\in\mathbb{R}$, its membership functions fulfills
	\begin{equation}\label{medianaGregor} 
	\med_{Gr}(\mathcal{X})(t) = \sup\{\inf_{\omega\in\Omega}\mathcal{X}(\omega)(X(\omega)) : X\in\mathcal{B}_{\mathcal{X}},  t\in\med(X)\}.
	\end{equation}
	There, $\mathcal{B}_{\mathcal{X}}$  is  the set of Borel-measurable mappings $X : (\Omega,\mathcal{A})\rightarrow (\mathbb{R},\beta(\mathbb{R}))$, where  $(\Omega,\mathcal{A},\mathbb{P})$ is the probabilistic space associated with the fuzzy random variable, $\mathcal{X} : \Omega\rightarrow\mathcal{F}_{c}(\mathbb{R}),$  and $\beta(\mathbb{R})$ is the Borel $\sigma$-algebra on $\mathbb{R}.$

\citet{GregorMedian} proved a useful characterization: The fuzzy median $\med_{Gr}(\mathcal{X})$ is uniquely given by
	\begin{equation}\label{propGregor} 
	(\med_{Gr}(\mathcal{X}))_{\alpha} = [\underline{\med}(\inf\mathcal{X}_{\alpha}),\overline{\med}(\sup\mathcal{X}_{\alpha})]
	\end{equation}
	for every $\alpha\in [0,1]$.

As Grzegorzewski's median  is defined using Zadeh's extension principle, it is unique. However, the one dimensional median is not necesarily unique. Additionally, if we define the  median as the fuzzy number which maximizes a certain function measuring central tendency, we may obtain non-unique  medians. Following this idea, a definition for the median of a fuzzy random number is proposed by \citet{medianfuzzy1}.

The  {\em 1-median} of $\mathcal{X}\in L^{1}[\mathcal{F}_{c}(\mathbb{R})]$ is the set of minimizers
\begin{eqnarray}\label{1medianfuzzynumber} 
\Med_1(\mathcal{X}) = \arg\min_{U\in\mathcal{F}_{c}(\mathbb{R})}\text{E}[\rho_{1}(U,\mathcal{X})].
\end{eqnarray}
Each element of $\Med_1(\mathcal X)$ is also called a {\em $1$-median}. \citet{medianfuzzy1} show that  in $\pfc(\R)$ there always exist $1$-medians, by constructing a fuzzy set $\med_{Si}(\mathcal X)$ with the property
$$s_{\med_{Si}(\mathcal X)}(u,\alpha) = \med(s_{\mathcal X}(u,\alpha))\quad\forall u\in 
\mathbb{S}^{0},\alpha\in [0,1].$$
Since an element of $\pfc(\R)$ is identified by its support function, $\med_{Si}(\mathcal X)$ is unique in having that property.
Moreover, \cite[Theorem 4.1.]{medianfuzzy1} states that
	\begin{eqnarray}\label{teoremaBea}
	\med_{Si}(\mathcal X)\in \Med_1(\mathcal X).
	\end{eqnarray}

It is hard to say that $\med_{Gr}$   is better or worse than $\med_{Si}$. In some cases, the Grzegorzewski median does not behave like the ordinary (midpoint) median in $\R$. For example, the median of $\text{I}_{\{0\}}$ and $\text{I}_{\{2\}}$, which represent the crisp points $0$ and $2$, is not $\text{I}_{\{1\}}$ but $\text{I}_{[0,2]}$. That is a consequence of the definition via Zadeh's extension principle, whence $\med_{Gr}(\mathcal X)(x)$ represents the degree to which $x$ is a median of $\mathcal X$. In the example, every point between 0 and 2 can fully be considered a median and from this point of view the result is sensible. On the other hand, in this example $\med_{Si}$ is $\text{I}_{\{1\}}$, which replicates the behaviour of the (midpoint) median in $\R$ as a unique representative. It can also be said that the information about other medians captured by $\med_{Gr}$ is lost.

Let us introduce now the notion of  {\em support median}. 
\begin{definition}
A fuzzy set $A\in\pfc(\R^p)$ is a {\em support median} of a fuzzy random variable $\mathcal X\in L^{0}[\mathcal{F}_{c}(\mathbb{R}^p)]$ if  $s_A(u,\alpha)$ is a median of $s_{\mathcal X}(u,\alpha)$ for all $u\in \mathbb{S}^{p-1}$ and $\alpha\in [0,1].$  That is, if $s_{A}(u,\alpha)\in\med(s_{\mathcal{X}}(u,\alpha))$ for all $u\in \mathbb{S}^{p-1}$ and $\alpha\in [0,1]$. 
The set of the support medians of $\mathcal X$ is denoted $\Med_s(\mathcal X)$.
\end{definition}
With the results of \citet{medianfuzzy1}, it is easily obtained for $\mathcal{X}\in L^{1}[\mathcal{F}_{c}(\mathbb{R})]$  that 
\begin{eqnarray}\label{e1}
\med_{Si}(\mathcal X)\in\Med_s(\mathcal X).
\end{eqnarray}
 For $\mathcal{X}\in L^{0}[\mathcal{F}_{c}(\mathbb{R})]$ it is also  easy to check 
\begin{eqnarray}\label{e2}
\med_{Gr}(\mathcal X)\in\Med_s(\mathcal X).
\end{eqnarray}
  Moreover, one has the following result where we have that each $\alpha$-level of a support median is contained in the corresponding $\alpha$-level of the Grzegorzewski fuzzy median.
\begin{proposition}\label{gregorsupport}
	Let $\mathcal{X}\in L^{0}[\mathcal{F}_{c}(\mathbb{R})].$ %
Then, $A_\alpha\subseteq(\text{med}_{Gr})_\alpha$ for every $A\in\Med_s(\mathcal X)$ and $\alpha\in [0,1].$
\end{proposition}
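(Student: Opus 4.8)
The plan is to reduce the set containment to two scalar endpoint inequalities and verify each one directly from the support-median property. Since $\mathcal{X}\in L^0[\mathcal{F}_c(\mathbb{R})]$ we are in dimension $p=1$, so the unit sphere is $\mathbb{S}^0=\{-1,1\}$ and each $\alpha$-level $A_\alpha$ is a compact interval $[\inf A_\alpha,\sup A_\alpha]$. The two admissible directions of the support function encode the two endpoints: $s_A(1,\alpha)=\sup A_\alpha$ and $s_A(-1,\alpha)=-\inf A_\alpha$. By the characterization \eqref{propGregor} we have $(\med_{Gr}(\mathcal{X}))_\alpha=[\underline{\med}(\inf\mathcal{X}_\alpha),\overline{\med}(\sup\mathcal{X}_\alpha)]$, so the desired inclusion $A_\alpha\subseteq(\med_{Gr})_\alpha$ is equivalent to the pair of inequalities $\sup A_\alpha\le\overline{\med}(\sup\mathcal{X}_\alpha)$ and $\inf A_\alpha\ge\underline{\med}(\inf\mathcal{X}_\alpha)$.

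First I would treat the upper endpoint. Since $\sup\mathcal{X}_\alpha(\omega)=s_{\mathcal{X}}(1,\alpha,\omega)$ for every $\omega$, the random variables $\sup\mathcal{X}_\alpha$ and $s_{\mathcal{X}}(1,\alpha)$ coincide and hence share their median interval. Applying the hypothesis $A\in\Med_s(\mathcal{X})$ at $u=1$ gives $s_A(1,\alpha)\in[\underline{\med}(s_{\mathcal{X}}(1,\alpha)),\overline{\med}(s_{\mathcal{X}}(1,\alpha))]$, whence $\sup A_\alpha=s_A(1,\alpha)\le\overline{\med}(s_{\mathcal{X}}(1,\alpha))=\overline{\med}(\sup\mathcal{X}_\alpha)$. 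This step is immediate.

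The lower endpoint is where the only genuine care is needed, because the direction $u=-1$ introduces a sign change. Here $\inf\mathcal{X}_\alpha(\omega)=-s_{\mathcal{X}}(-1,\alpha,\omega)$, so I would invoke the elementary reflection property of medians, $\Med(-Y)=-\Med(Y)$, which yields $\underline{\med}(\inf\mathcal{X}_\alpha)=\underline{\med}(-s_{\mathcal{X}}(-1,\alpha))=-\overline{\med}(s_{\mathcal{X}}(-1,\alpha))$. Applying $A\in\Med_s(\mathcal{X})$ at $u=-1$ gives $-\inf A_\alpha=s_A(-1,\alpha)\le\overline{\med}(s_{\mathcal{X}}(-1,\alpha))$, that is, $\inf A_\alpha\ge-\overline{\med}(s_{\mathcal{X}}(-1,\alpha))=\underline{\med}(\inf\mathcal{X}_\alpha)$. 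Combining the two endpoint inequalities gives $A_\alpha\subseteq(\med_{Gr})_\alpha$ for every $\alpha\in[0,1]$. The main obstacle, such as it is, is purely bookkeeping: correctly matching the direction $u=-1$ of the support function to the infimum of the level and negating the median interval accordingly. Once that is in place, the statement follows directly from the support-median definition and the characterization \eqref{propGregor}.
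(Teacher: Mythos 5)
Your proof is correct and follows essentially the same route as the paper's: both reduce the inclusion $A_\alpha\subseteq(\med_{Gr}(\mathcal X))_\alpha$ to the two endpoint inequalities obtained from the support-median property at $u=1$ and $u=-1$, and then conclude via the characterization \eqref{propGregor}. The only cosmetic difference is that you make the median reflection identity $\Med(-Y)=-\Med(Y)$ explicit, whereas the paper uses it implicitly when writing $-s_A(-1,\alpha)\in\Med(-s_{\mathcal X}(-1,\alpha))=\Med(\inf\mathcal X_\alpha)$.
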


Thus $\med_{Gr}(\mathcal X)$ has the maximality property that each support median of $\mathcal X$ must be contained in it.

The results in \eqref{e1}, \eqref{e2} and Proposition \ref{gregorsupport}   hold only for the one-dimensional ($p=1$) case. For $p>1$, there exist fuzzy random variables without any support median, as we show in the following example.

\begin{example}
	Given the probability space $(\{\omega_{1},\omega_{2},\omega_{3}\},\mathcal{P}(\{\omega_{1},\omega_{2},\omega_{3}\}),\mathbb{P})$, with $\mathbb{P}(\{\omega_{1}\}) = \mathbb{P}(\{\omega_{2}\}) = \mathbb{P}(\{\omega_{3}\})$, let $\mathcal{X} : \{\omega_{1},\omega_{2},\omega_{3}\}\rightarrow\mathcal{F}_{c}(\mathbb{R}^{2})$ be a fuzzy random variable defined by $\mathcal{X}(\omega_{1}) = \text{I}_{\{(0,-1)\}}$, $\mathcal{X}(\omega_{2}) = \text{I}_{\{(2,0)\}}$ and $\mathcal{X}(\omega_{3}) = \text{I}_{\{(1,1)\}}$.
	
	This example presents that the fuzzy random variable $\mathcal{X}$ has no support medians, that is, $\Med_s(\mathcal X) = \emptyset$.
	Assuming for a contradiction that $\Med_s(\mathcal X)\neq\emptyset$, let $A$ be a support median of $\mathcal{X},$ i.e. 
	\begin{equation}\label{sb}
	s_{A}(u,\alpha)\in\med(s_{\mathcal{X}}(u,\alpha))\mbox{ for all }u\in \mathbb{S}^{p-1}\mbox{ and }\alpha\in [0,1].
	\end{equation} 
	
	 First of all, we prove that $A$ is the indicator function of a point, that is, $A = \text{I}_{\{(x,y)\}}$ for an $(x,y)\in\mathbb R^2$.
	 By the definition of support function, we have that
	$$
	 \med(s_{\mathcal{X}}(u,\alpha)) = \med(\{\langle u,(0,-1)\rangle, \langle u,(2,0)\rangle ,\langle u,(1,1)\rangle\})
	$$
	for every $u\in\mathbb{S}^{1}$ and $\alpha\in [0,1]$. As the median of three distinct real points is a single point, taking \eqref{sb} into account, we have that 
	$s_{A}(u,\alpha)= \med(\{\langle u,(0,-1)\rangle, \langle u,(2,0)\rangle ,\langle u,(1,1)\rangle\})$ for every $u\in\mathbb{S}^{1}$ and $\alpha\in [0,1].$ Thus, $s_{A}(u,\alpha)$ does not change with $\alpha.$ 
	Additionally, $\med(s_{\mathcal{X}}(-u,\alpha)) = -\med(s_{\mathcal{X}}(u,\alpha))$ for every $u\in\mathbb{S}^{1}$ and $\alpha\in [0,1]$. We prove that the supremum and the infimum of the projection of every $\alpha$-level of $A$ in every direction $u$ coincides, which implies that $A$ is the indicator function of a single point of $\mathbb R^2$.
	On the one hand
	$$
	\med(s_{\mathcal{X}}(u,\alpha)) = s_{A}(u,\alpha) = \sup_{v\in A_{\alpha}}\langle u,v\rangle.
	$$
	On the other hand
	\begin{equation}
		\begin{aligned}\nonumber
			&- \med(s_{\mathcal{X}}(u,\alpha)) = \med(s_{\mathcal{X}}(-u,\alpha)) = s_{A}(-u,\alpha) =\\
			&\sup_{v\in A_{\alpha}} \langle -u, v\rangle = \sup_{v\in A_{\alpha}} - \langle u, v\rangle  = - \inf_{v\in A_{\alpha}} \langle u, v\rangle.
		\end{aligned}
	\end{equation}
	Thus
	$$
	\inf_{v\in A_{\alpha}} \langle u, v\rangle = \med(s_{\mathcal{X}}(u,\alpha)) = \sup_{v\in A_{\alpha}}\langle u,v\rangle
	$$
	for every $u\in\mathbb{S}^{1}$ and $\alpha\in [0,1]$. It implies that $A = \text{I}_{\{(x,y)\}}$ for some $(x,y)\in\mathbb{R}^{2}$.
	
	Now, we prove that there is no such $(x,y)\in\mathbb R^2$ such that $A = \text{I}_{\{(x,y)\}}$. Let $(1,0)\in\mathbb{S}^{1}$. It is clear that
	$$
	x = s_{A}((1,0),\alpha) = \med(\{0,2,1\}) = 1
	$$
	for every $\alpha\in [0,1]$, thus $x = 1$. If we take $(0,1)\in\mathbb{S}^{1}$,
	$$
	y = s_{A}((0,1),\alpha) = \med(\{-1,0,1\}) = 0
	$$
	for every $\alpha\in [0,1]$ and $y = 1$.
	
	The support median $A$ must be the fuzzy number $\text{I}_{\{(1,0)\}}$. But, if we consider the direction $u = (1/\sqrt{5}, -2/\sqrt{5})\in\mathbb{S}^{1}$, 
	$$
	s_{A}(u,\alpha) = \cfrac{1}{\sqrt{5}}\neq\cfrac{2}{\sqrt{5}} = \med(\{\cfrac{2}{\sqrt{5}},\cfrac{2}{\sqrt{5}}, -\cfrac{1}{\sqrt{5}}\})
	$$
	and $s_{A}(u,\alpha)\neq\med(s_{\mathcal{X}}(u,\alpha))$ for every $\alpha\in [0,1]$. This implies that the  support median for the fuzzy random variable $\mathcal{X}$ does not exist.
\end{example}

Since the example uses indicator functions of crisp points of $\R^2$, it indicates that already in the case of (crisp) bivariate data it is not possible, in general, to define a median by requiring its projection in every direction to be a median of the corresponding projection of the random vector. Therefore this behaviour is not a `flaw' of the notion of a support median for fuzzy data.

The notion of a support median is {\em ad hoc} in the sense that its definition is natural (given the identification between an element of $\pfc(\R^p)$ and its support function) but that does not prove, by itself, that support medians will inherit the nice theoretical properties usually ascribed to the median. In the remainder of the paper, we explore the idea that the framework of statistical depth functions for fuzzy data and fuzzy random variables can adequately supply mathematical criteria by which some or all support medians are the unique fuzzy sets satisfying certain nice properties. To that end, we introduce now the definition of a {\em depth median}.

\begin{definition}
Let  $\mathcal X\in L^{0}[\mathcal{F}_{c}(\mathbb{R}^{p})]$ and $D$ be a depth function. %
We define the  {\em  set of  depth medians} (or {\em $D$-medians}),  of $\mathcal X$ based on $\mathcal{J}\subseteq\mathcal{F}_{c}(\mathbb{R}^{p})$ as
\begin{equation}\label{J}
\Med_\mathcal{J}(\mathcal X;D)=\{A\in\mathcal{J}: D(A;\mathcal X)=\max_{U\in\mathcal{J}}D(U;\mathcal X)\}, %
\end{equation}
where $D(A;\mathcal X)$ is  the depth function evaluated at $A$  with respect to the distribution of  $\mathcal X.$
The elements of $\Med_\mathcal{J}(\mathcal X;D)$ are called {\em depth medians, $D$-medians,} of $\mathcal X$ based on $\mathcal{J}$. 

When $\mathcal{J}=\pfc(\R^p),$ as occurs in the rest of this work, we drop the $\mathcal{J}$ in \eqref{J}, making use of the notation $\Med(\mathcal X;D)$ and refer to it as the set of  depth medians, $D$-medians,  of $\mathcal X.$ %
\end{definition}

It should be understood that maximizing some depth function does not automatically imply having the usual properties of a median, such as robustness. This is exemplified in the multivariate case by zonoid depth \cite{zonoid} or the depth based on large deviations \cite{PedroDepth}. In both cases, when being in dimension one, the maximum is  attained at the mean, which is not a robust location estimator.
However it is reasonable to think that maximizing a depth function is a nice property for a location estimator, specially in view of the properties that generally constitute the notion of depth  (maximization at the center of symmetry, decreasing depth as one moves away from the center, vanishing depth when going to infinity, ...).

We would like to close this section by recalling that not all available definitions of a median for fuzzy data are support medians. For instance, \cite[Definition 4.1]{Bea2} uses a mid-ldev-rdev representation. It is based on minimizing the $L^1$-distance for an appropriate metric and thus it is a depth median for the corresponding $L^1$-depth associated to that metric.

\section{Main results} %
\label{max}

The results in this section connect the concepts in Section \ref{def}. In particular: $D$-medians ($\Med(\cdot;D)$), support medians ($\Med_s(\cdot)$), and $1$-medians ($\Med_1(\cdot)$). We start by noting that $1$-medians can easily be recast as the maximizers of the $L^1$-depth,  in \eqref{D_r}, that makes use of the $\rho_1$-metric.
\begin{proposition}\label{D1support}
	Let $\mathcal X\in L^1[\pfc(\R)].$ Then, %
	$\Med_1(\mathcal X)=\Med(\mathcal X; D_1).$
\end{proposition}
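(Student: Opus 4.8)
The plan is to observe that the two sets in the statement are solution sets of one and the same optimization problem, viewed through a monotone transformation. By definition \eqref{1medianfuzzynumber}, $\Med_1(\mathcal X)$ is the set of minimizers of the functional $U\mapsto\text{E}[\rho_1(U,\mathcal X)]$, whereas $\Med(\mathcal X;D_1)$ is, by \eqref{J}, the set of maximizers of $U\mapsto D_1(U;\mathcal X)$. Since $D_1$ is obtained from that same functional by applying $t\mapsto(1+t)^{-1}$, which is strictly decreasing, the maximizers of the latter coincide with the minimizers of the former, and the equality follows.

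First I would rule out the exceptional ``infinite expectation'' branch in \eqref{D_r}. Because $\mathcal X\in L^1[\pfc(\R)]$ and each $U\in\pfc(\R)$ is compact, the triangle inequality for $\rho_1$ gives $\rho_1(U,\mathcal X)\le\rho_1(U,\text{I}_{\{0\}})+\rho_1(\text{I}_{\{0\}},\mathcal X)$ pointwise in $\omega$, where the first summand is a finite constant and the second has finite expectation by the $L^1$ hypothesis. Taking expectations yields $\text{E}[\rho_1(U,\mathcal X)]<\infty$ for every $U$, so that $D_1(U;\mathcal X)=(1+\text{E}[\rho_1(U,\mathcal X)])^{-1}$ holds unconditionally on $\pfc(\R)$, with no need to invoke the convention $D_1=0$.

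The core of the argument is then order-theoretic. Writing $g(U)=\text{E}[\rho_1(U,\mathcal X)]$ and $\phi(t)=(1+t)^{-1}$, the map $\phi\colon[0,\infty)\to(0,1]$ is a strictly decreasing bijection, so $\phi(g(U_1))\ge\phi(g(U_2))$ if and only if $g(U_1)\le g(U_2)$; consequently $\arg\max_U\phi(g(U))=\arg\min_U g(U)$. Applied to $D_1=\phi\circ g$, this gives $\Med(\mathcal X;D_1)=\Med_1(\mathcal X)$, provided the common extremum is actually attained. Attainment is supplied by the existence result of \citet{medianfuzzy1}, which produces $\med_{Si}(\mathcal X)\in\Med_1(\mathcal X)$: the minimum of $g$, equivalently the maximum of $D_1$, is therefore reached, so the $\max$ appearing in \eqref{J} is legitimate and both sets are nonempty.

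I do not anticipate a genuine obstacle, since the statement is in essence a reformulation rather than a new fact. The only two points deserving care are the finiteness of $\text{E}[\rho_1(U,\mathcal X)]$ (so that the closed-form expression for $D_1$ applies to every argument) and the attainment of the optimum (so that $\Med(\mathcal X;D_1)$ is well defined via a genuine maximum); both are secured by the $L^1$ assumption together with the cited existence of $1$-medians.
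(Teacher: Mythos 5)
Your proof is correct and follows essentially the same route as the paper, which disposes of the statement in one line by noting that maximizing $D_1(\cdot;\mathcal X)=(1+\mathrm{E}[\rho_1(\cdot,\mathcal X)])^{-1}$ is equivalent to minimizing $\mathrm{E}[\rho_1(\cdot,\mathcal X)]$ under the strictly decreasing map $t\mapsto(1+t)^{-1}$. Your additional checks (finiteness of the expectation under the $L^1$ hypothesis, and attainment via $\med_{Si}(\mathcal X)\in\Med_1(\mathcal X)$) are sound refinements of details the paper leaves implicit, not a different argument.
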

The proof easily follows from the fact that  minimizing $E[\rho_1(\cdot,\mathcal X)]$ is equivalent to maximizing $D_1(\cdot;\mathcal X)=(1+E[\rho_1(\cdot,\mathcal X)])^{-1}$. 

We  establish next the first important result in this work:  $1$-medians, or $D_1$-medians, are exactly the support medians.  This result solves the minimization problem in  \eqref{1medianfuzzynumber}.
\begin{theorem}\label{Teorema1Mediana}
	Let $\mathcal X\in L^1[\pfc(\R)].$  Then, $\Med_1(\mathcal X) = \Med_s(\mathcal X)$.
\end{theorem}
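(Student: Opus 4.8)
The plan is to reduce everything to a pointwise minimization in the pair $(u,\alpha)$ and then to control the resulting null set. First I would use Tonelli's theorem (all integrands are nonnegative) to write, for every $U\in\pfc(\R)$,
\[ \text{E}[\rho_1(U,\mathcal X)]=\int_{\mathbb S^0}\int_{[0,1]}\text{E}[\,|s_U(u,\alpha)-s_{\mathcal X}(u,\alpha)|\,]\,d\nu(\alpha)\,d\mathcal V^1(u), \]
the inner expectation being finite since $\mathcal X\in L^1[\pfc(\R)]$. For a fixed $(u,\alpha)$ the only role of $U$ is through the deterministic value $c=s_U(u,\alpha)$, and the classical fact that $c\mapsto\text{E}[\,|c-s_{\mathcal X}(u,\alpha)|\,]$ is convex and minimized exactly on the median interval $\Med(s_{\mathcal X}(u,\alpha))$ yields the lower bound
\[ \text{E}[\rho_1(U,\mathcal X)]\ge\int_{\mathbb S^0}\int_{[0,1]}\min_{c\in\R}\text{E}[\,|c-s_{\mathcal X}(u,\alpha)|\,]\,d\nu(\alpha)\,d\mathcal V^1(u). \]
This bound is attained by $\med_{Si}(\mathcal X)\in\pfc(\R)$, whose support function equals $\med(s_{\mathcal X}(u,\alpha))$ and hence makes the integrand pointwise minimal, so the right-hand side is precisely $\min_U\text{E}[\rho_1(U,\mathcal X)]$.

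With this, one inclusion is immediate. If $A\in\Med_s(\mathcal X)$, then $s_A(u,\alpha)\in\Med(s_{\mathcal X}(u,\alpha))$ for every $(u,\alpha)$, so the integrand is pointwise minimal everywhere and $\text{E}[\rho_1(A,\mathcal X)]$ attains the minimum; by the definition of $\Med_1$ as the set of minimizers (see also Proposition \ref{D1support}), this is exactly $A\in\Med_1(\mathcal X)$. For the converse, if $A\in\Med_1(\mathcal X)$ then the two displayed integrals coincide, so the nonnegative gap $\text{E}[\,|s_A(u,\alpha)-s_{\mathcal X}(u,\alpha)|\,]-\min_{c}\text{E}[\,|c-s_{\mathcal X}(u,\alpha)|\,]$ vanishes $(\mathcal V^1\otimes\nu)$-almost everywhere. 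Since $\mathbb S^0=\{-1,+1\}$ consists of two atoms of positive Haar measure, this says that for each $u\in\{-1,+1\}$ one has $s_A(u,\alpha)\in\Med(s_{\mathcal X}(u,\alpha))$ for $\nu$-almost every $\alpha\in[0,1]$.

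The hard part will be upgrading this `almost every $\alpha$' to `every $\alpha$', as the definition of a support median demands. I would do this through monotonicity and one-sided continuity in $\alpha$. The endpoints $\underline{\med}(s_{\mathcal X}(u,\cdot))$ and $\overline{\med}(s_{\mathcal X}(u,\cdot))$ are nonincreasing, because $\alpha_1\le\alpha_2$ forces $s_{\mathcal X}(u,\alpha_1)\ge s_{\mathcal X}(u,\alpha_2)$ pathwise (nested level sets), giving stochastic dominance and hence dominance of both medians. They are moreover left-continuous on $(0,1]$: the midpoint $\med(s_{\mathcal X}(u,\cdot))$ is the support function of $\med_{Si}(\mathcal X)\in\pfc(\R)$ and is therefore left-continuous, and a left-continuous arithmetic mean of two nonincreasing functions forces both summands to be left-continuous (their left-limit jumps are nonnegative and sum to zero). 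As $s_A(u,\cdot)$ is itself nonincreasing and left-continuous, a standard property of support functions of elements of $\pfc(\R)$, the sandwich $\underline{\med}(s_{\mathcal X}(u,\alpha))\le s_A(u,\alpha)\le\overline{\med}(s_{\mathcal X}(u,\alpha))$, valid for a.e. $\alpha$, passes to every $\alpha\in(0,1]$ by taking left limits along the full-measure set; the case $\alpha=0$ follows on letting $\alpha\downarrow0$, using $s_{\mathcal X}(u,\alpha)\uparrow s_{\mathcal X}(u,0)$ and the consequent right-continuity of the median endpoints at $0$. This gives $s_A(u,\alpha)\in\Med(s_{\mathcal X}(u,\alpha))$ for all $(u,\alpha)$, that is $A\in\Med_s(\mathcal X)$, closing both inclusions. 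I expect the verification of the left-continuity of the median endpoints, and with it the careful passage from the null set to all $\alpha$, to be the only genuinely delicate point; the remainder is bookkeeping.
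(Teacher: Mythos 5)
Your proof has the same skeleton as the paper's: Tonelli plus the classical $L^1$-characterization of the univariate median reduces membership in $\Med_1(\mathcal X)$ to the condition $s_A(u,\alpha)\in\Med(s_{\mathcal X}(u,\alpha))$ for each $u\in\{-1,1\}$ and almost every $\alpha$, and then one-sided continuity in $\alpha$ upgrades ``almost every'' to ``every''. Your execution differs in detail: you prove $\Med_s(\mathcal X)\subseteq\Med_1(\mathcal X)$ directly via the pointwise lower bound (the paper gets it by inspecting Sinova et al.'s proof), and for the upgrade step you work with the endpoint functions $\alpha\mapsto\underline{\med}(s_{\mathcal X}(u,\alpha))$, $\alpha\mapsto\overline{\med}(s_{\mathcal X}(u,\alpha))$, deriving their left-continuity on $(0,1]$ from left-continuity of their midpoint, the support function of $\med_{Si}(\mathcal X)\in\pfc(\R)$ --- a neat argument, and correct --- whereas the paper manipulates the probabilities $\mathbb{P}(s_{\mathcal X}(1,\alpha_n)\lessgtr s_B(1,\alpha_n))$ along approximating sequences. (One housekeeping point: Tonelli needs the joint measurability of $s_{\mathcal X}$ in $(u,\alpha,\omega)$, which the paper cites explicitly.) The handling of $\alpha\in(0,1]$ is sound.

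The gap is at $\alpha=0$: your claim that the median endpoints are right-continuous at $0$ is false for the upper endpoint. Take $\Omega=(0,1)$ with Lebesgue measure, $\mathcal X(\omega)=\text{I}_{\{0\}}$ for $\omega\in[1/2,1)$, and for $\omega\in(0,1/2)$ let $\mathcal X(\omega)$ be the fuzzy set with membership $1$ at $x=0$, membership $1/2-\omega$ on $(0,1]$, and $0$ elsewhere. For $0<\alpha<1/2$, the random variable $s_{\mathcal X}(1,\alpha)$ equals $1$ with probability $1/2-\alpha<1/2$ and $0$ otherwise, so $\Med(s_{\mathcal X}(1,\alpha))=\{0\}$ and $\overline{\med}(s_{\mathcal X}(1,\alpha))=0$; but $s_{\mathcal X}(1,0)$ equals $1$ with probability exactly $1/2$, so $\Med(s_{\mathcal X}(1,0))=[0,1]$ and $\overline{\med}(s_{\mathcal X}(1,0))=1$. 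Thus $\lim_{\alpha\downarrow 0}\overline{\med}(s_{\mathcal X}(1,\alpha))=0\neq 1$: the a.s.\ monotone convergence $s_{\mathcal X}(u,\alpha)\uparrow s_{\mathcal X}(u,0)$ does not transfer to the upper median endpoint, because probability mass can sit exactly at the $1/2$ threshold in the limit.

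The step is repairable, but asymmetrically, not by the symmetric continuity statement you invoke. For the upper bound no continuity is needed: by monotonicity $s_A(u,\alpha)\le\overline{\med}(s_{\mathcal X}(u,\alpha))\le\overline{\med}(s_{\mathcal X}(u,0))$ for a.e.\ $\alpha$, and letting $\alpha\downarrow 0$ along the full-measure set, right-continuity of $s_A(u,\cdot)$ at $0$ gives $s_A(u,0)\le\overline{\med}(s_{\mathcal X}(u,0))$. For the lower bound you do need right-continuity of the \emph{lower} endpoint at $0$, which is true but requires proof: since $s_{\mathcal X}(u,\alpha)\uparrow s_{\mathcal X}(u,0)$ pointwise, $F_{u,\alpha}(t)\downarrow F_{u,0}(t)$ for every $t$ by continuity from above; if $\sup_{\alpha>0}\underline{\med}(s_{\mathcal X}(u,\alpha))=L<\underline{\med}(s_{\mathcal X}(u,0))$, then any $t\in(L,\underline{\med}(s_{\mathcal X}(u,0)))$ satisfies $F_{u,\alpha}(t)\ge 1/2$ for all $\alpha>0$, hence $F_{u,0}(t)\ge 1/2$, a contradiction. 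With these two substitutions for your final sentence the proof closes; it is telling that the paper's own Case II handles $\alpha=0$ through two separate one-sided probabilistic estimates rather than through any continuity property of the median interval itself.
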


The interest of Theorem \ref{Teorema1Mediana} goes both ways. From the perspective of $1$-medians, it provides a useful characterization by which they can be obtained without the burden of solving an stochastic optimization problem over the whole $\pfc(\R)$. From the perspective of support medians, it shows that they are the unique fuzzy sets with the $L^1$-minimization property that has been used as a mathematical definition of the median in a metric space since Fr\'echet \cite[{\it position \'equiprobable}, p.227]{Fre}.

An application  of Theorem \ref{Teorema1Mediana} is in proving that $D_1$ satisfies  one of the properties constituting the notion of geometric fuzzy depth \cite{primerarticulo}. For a depth function $D,$ based on $\mathcal{H}\subseteq L^{0}[\mathcal{F}_{c}(\mathbb{R}^{p})]$ and $\mathcal{J}\subseteq\mathcal{F}_{c}(\mathbb{R}^{p}),$
this property, known as P3b, states that for each $\mathcal{X}\in\mathcal{H}$ and each $A\in\mathcal{J}$  such that  $D(A;\mathcal{X}) = \sup \{D(B;\mathcal{X}) : B\in\mathcal{J}\},$ $D$ fulfills 
\begin{eqnarray*}
	D(A;\mathcal{X})\geq D(U;\mathcal{X})\geq D(V;\mathcal{X}) 
	\end{eqnarray*}
$\mbox{for all } U,V\in\mathcal{J} \mbox{ satisfying } d(A,V) = d(A,U) + d(U,V),$	 where $d:\mathcal{F}_{c}(\mathbb{R}^{p})\times\mathcal{F}_{c}(\mathbb{R}^{p})\rightarrow[0,\infty)$ is a metric.
For the fulfillment of other properties, see  \citet{proyecciontercero}.

\begin{theorem}\label{D1rho1}
	Let $\mathcal X\in L^{1}[\mathcal{F}_{c}(\mathbb{R})].$ Then,  $D_{1}$ satisfies P3b for the $\rho_{1}$ metric.
\end{theorem}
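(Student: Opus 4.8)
The plan is to exploit the explicit form $D_1(\cdot;\mathcal X)=(1+E[\rho_1(\cdot,\mathcal X)])^{-1}$. Since $t\mapsto(1+t)^{-1}$ is strictly decreasing on $[0,\infty)$, the chain $D_1(A;\mathcal X)\ge D_1(U;\mathcal X)\ge D_1(V;\mathcal X)$ required by P3b is equivalent to
\begin{equation*}
E[\rho_1(A,\mathcal X)]\le E[\rho_1(U,\mathcal X)]\le E[\rho_1(V,\mathcal X)].
\end{equation*}
The hypothesis of P3b is that $A$ attains $\sup_{B\in\pfc(\R)}D_1(B;\mathcal X)$, i.e.\ that $A\in\Med(\mathcal X;D_1)$. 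By Proposition \ref{D1support} and Theorem \ref{Teorema1Mediana} this means $A\in\Med_1(\mathcal X)=\Med_s(\mathcal X)$, so $A$ is a support median and hence $s_A(u,\alpha)\in\med(s_{\mathcal X}(u,\alpha))$ for every $u\in\mathbb{S}^0$ and $\alpha\in[0,1]$. This is the fact I would carry into the core estimate.

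For fixed $(u,\alpha)$ write $\xi=s_{\mathcal X}(u,\alpha)$ and $g_{u,\alpha}(t)=E[|t-\xi|]$, which is finite because $\mathcal X\in L^1[\pfc(\R)]$ and is a convex function of $t$ whose set of minimizers is exactly $\Med(\xi)$. The first step is to turn the metric betweenness hypothesis $\rho_1(A,V)=\rho_1(A,U)+\rho_1(U,V)$ into a pointwise statement. Integrating the elementary inequality $|s_A-s_V|\le|s_A-s_U|+|s_U-s_V|$ against $\mathcal V^p\otimes\nu$ recovers the triangle inequality for $\rho_1$, so the assumed equality forces $|s_A-s_V|=|s_A-s_U|+|s_U-s_V|$ for $\mathcal V^p\otimes\nu$-almost every $(u,\alpha)$; that is, $s_U(u,\alpha)$ lies on the segment joining $s_A(u,\alpha)$ and $s_V(u,\alpha)$ a.e. Writing $s_U=(1-\lambda)s_A+\lambda s_V$ with $\lambda\in[0,1]$, convexity of $g_{u,\alpha}$ together with the fact that $s_A(u,\alpha)$ minimizes $g_{u,\alpha}$ yields
\begin{equation*}
g_{u,\alpha}(s_A)\le g_{u,\alpha}(s_U)\le(1-\lambda)g_{u,\alpha}(s_A)+\lambda g_{u,\alpha}(s_V)\le g_{u,\alpha}(s_V)
\end{equation*}
for almost every $(u,\alpha)$, the last inequality again using that $s_A$ is a minimizer so $g_{u,\alpha}(s_A)\le g_{u,\alpha}(s_V)$. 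The degenerate case $s_A(u,\alpha)=s_V(u,\alpha)$ forces $s_U(u,\alpha)$ to agree with both and is harmless.

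It remains to integrate. By Fubini's theorem, justified by the $L^1$ integrability of the support function, $E[\rho_1(B,\mathcal X)]=\int_{\mathbb{S}^0}\int_{[0,1]}g_{u,\alpha}(s_B(u,\alpha))\,d\nu(\alpha)\,d\mathcal V^p(u)$ for $B\in\{A,U,V\}$, so integrating the pointwise chain $g_{u,\alpha}(s_A)\le g_{u,\alpha}(s_U)\le g_{u,\alpha}(s_V)$ over $\mathbb{S}^0\times[0,1]$ delivers $E[\rho_1(A,\mathcal X)]\le E[\rho_1(U,\mathcal X)]\le E[\rho_1(V,\mathcal X)]$ and hence P3b. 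I expect the only delicate points to be the almost-everywhere passage from metric betweenness to segmentwise betweenness of the support functions (which rests on the strictness of the absolute-value triangle inequality together with non-negativity of the integrand) and verifying the Fubini interchange; the convex-minimizer step itself is immediate once $A$ is known to be a support median.
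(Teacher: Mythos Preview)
Your proof is correct and follows essentially the same route as the paper's: both use Theorem~\ref{Teorema1Mediana} to identify the maximizer $A$ as a support median, both reduce the $\rho_1$-betweenness hypothesis to almost-everywhere betweenness of the support functions via the nonnegative-integrand argument, and both finish by applying convexity of $t\mapsto E[|t-s_{\mathcal X}(u,\alpha)|]$ together with the minimizing property of $s_A(u,\alpha)$ before integrating back through Fubini. Your parametrization $s_U=(1-\lambda)s_A+\lambda s_V$ is the same as the paper's $s_C=\frac{1}{1+\lambda_{u,\alpha}}(s_A+\lambda_{u,\alpha}s_B)$ up to a change of variables, and your explicit mention of the degenerate case and of the Fubini justification makes the write-up slightly more careful, but there is no substantive difference in method.
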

The proof is based on that Theorem \ref{Teorema1Mediana} characterizes the fuzzy sets that maximize $D_1,$ as they are support medians. In \cite{proyecciontercero}, property P3b was studied for the $\rho_r$ metrics but results for $\rho_1$ could not be obtained with that method of proof since the $L^1$-norm between support functions is not strictly convex.

We turn now to showing that the support medians are exactly the depth medians induced by the notion of Tukey depth, in \eqref{T}. Interestingly, this characterization of support medians does not require the random variable to be in $L^{1}[\mathcal{F}_{c}(\mathbb{R})],$ %
and so it is valid for every fuzzy random number.
\begin{theorem}\label{teoremaTukey1mediana}
	Let $\mathcal X\in L^{0}[\mathcal{F}_{c}(\mathbb{R})].$ Then, $\Med_s(\mathcal X) = \Med(\mathcal X;D_{FT})$.
\end{theorem}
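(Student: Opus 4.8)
The plan is to reduce the fuzzy halfspace depth to a one-parameter family of univariate halfspace depths, one for each direction--level pair, and then to exploit the elementary description of the univariate median through halfspace depth. Unwinding the definitions of $S_{u,\alpha}^{-}$ and $S_{u,\alpha}^{+}$, for $p=1$ one gets
\begin{equation*}
D_{FT}(A;\mathcal X)=\inf_{u\in\mathbb{S}^{0},\,\alpha\in[0,1]}\min\bigl(\mathbb{P}[s_{\mathcal X}(u,\alpha)\leq s_A(u,\alpha)],\,\mathbb{P}[s_{\mathcal X}(u,\alpha)\geq s_A(u,\alpha)]\bigr),
\end{equation*}
so the quantity inside the infimum is exactly the univariate Tukey depth of the real number $s_A(u,\alpha)$ with respect to the real random variable $s_{\mathcal X}(u,\alpha)$. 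The first ingredient I would record is the standard one-dimensional fact: for a real random variable $Y$ and $t\in\R$, $\min(\mathbb{P}[Y\leq t],\mathbb{P}[Y\geq t])\geq 1/2$ if and only if $t$ is a median of $Y$, the value being strictly below $1/2$ for every non-median $t$.

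From this the basic equivalence follows: $A\in\Med_s(\mathcal X)$ means $s_A(u,\alpha)$ is a median of $s_{\mathcal X}(u,\alpha)$ for all $(u,\alpha)$, which by the univariate fact is equivalent to every term of the infimum being $\geq 1/2$, i.e. to $D_{FT}(A;\mathcal X)\geq 1/2$. Conversely, if $A$ is not a support median there is a pair $(u_0,\alpha_0)$ at which $s_A(u_0,\alpha_0)$ fails to be a median, so the corresponding term, and hence the infimum, is $<1/2$. Thus $\{A\in\pfc(\R):D_{FT}(A;\mathcal X)\geq 1/2\}=\Med_s(\mathcal X)$, a set which is nonempty since $\med_{Gr}(\mathcal X)\in\Med_s(\mathcal X)$ by \eqref{e2}.

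The main obstacle is that the maximal value of $D_{FT}$ need not be $1/2$: if for some $(u,\alpha)$ the variable $s_{\mathcal X}(u,\alpha)$ is degenerate, or puts enough mass at its unique median, the associated univariate depth can exceed $1/2$, and then a priori the maximizers of $D_{FT}$ might form a proper subset of $\Med_s(\mathcal X)$. To exclude this I would show that all support medians attain one and the same value of $D_{FT}$. The key sublemma is that whenever the median set of $s_{\mathcal X}(u,\alpha)$ is a non-degenerate interval, $\min(\mathbb{P}[Y\leq t],\mathbb{P}[Y\geq t])=1/2$ for \emph{every} median $t$ of $Y=s_{\mathcal X}(u,\alpha)$: non-uniqueness forces $\mathbb{P}[Y\leq\underline{\med}(Y)]=\mathbb{P}[Y\geq\overline{\med}(Y)]=1/2$ and no atom in the interior of the median interval, so both probabilities equal $1/2$ throughout. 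When instead the median is a single point, every support median takes that same value $s_A(u,\alpha)$ at $(u,\alpha)$. In either case the map $(u,\alpha)\mapsto\min(\mathbb{P}[s_{\mathcal X}(u,\alpha)\leq s_A(u,\alpha)],\mathbb{P}[s_{\mathcal X}(u,\alpha)\geq s_A(u,\alpha)])$ coincides pointwise for any two support medians, so their infima, hence their $D_{FT}$-values, agree.

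Finally I would assemble the pieces. Writing $c$ for this common value, one has $c\geq 1/2$, and by the equivalence of the second paragraph $D_{FT}(A;\mathcal X)<1/2\leq c$ for every $A\notin\Med_s(\mathcal X)$. Hence $\max_{U\in\pfc(\R)}D_{FT}(U;\mathcal X)=c$ is attained exactly on $\Med_s(\mathcal X)$, which is precisely the claim $\Med(\mathcal X;D_{FT})=\Med_s(\mathcal X)$. The only genuinely delicate point is the tie-breaking argument of the third paragraph; everything else is bookkeeping around the one-dimensional characterization of the median via halfspace depth, and no integrability assumption beyond $\mathcal X\in L^{0}[\pfc(\R)]$ is needed.
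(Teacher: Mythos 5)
Your proof is correct, and although it rests on the same reduction as the paper's --- writing $D_{FT}(A;\mathcal X)$ as an infimum over $(u,\alpha)$ of the univariate halfspace depths $HD(s_A(u,\alpha);s_{\mathcal X}(u,\alpha))$ and exploiting the characterization of univariate medians via halfspace depth --- its logical organization is genuinely different, in ways that matter. The paper argues by double inclusion: a support median attains the pointwise supremum $\sup_{y\in\R}HD(y;s_{\mathcal X}(u,\alpha))$ at every $(u,\alpha)$ (Lemma \ref{teoremasupremoTukey}) and hence dominates every competitor, while a maximizer that is not a support median would have depth $<1/2$, contradicting the existence of a support median of depth $\geq 1/2$. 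You instead identify $\Med_s(\mathcal X)$ with the level set $\{A\in\pfc(\R):D_{FT}(A;\mathcal X)\geq 1/2\}$ and then show $D_{FT}$ is constant on that set, via your tie-breaking sublemma that every median of $s_{\mathcal X}(u,\alpha)$ has univariate depth exactly $1/2$ whenever the median is not unique; that sublemma is correct (it is in substance the non-uniqueness case inside the paper's proof of Lemma \ref{teoremasupremoTukey}), though you could have obtained the constancy at once from that lemma, since any two support medians attain the pointwise maximum of $HD(\cdot;s_{\mathcal X}(u,\alpha))$ at each $(u,\alpha)$ and so have equal depth. Your organization buys two things. First, it avoids the paper's minimax chain, whose final step $\sup_{y\in\R}\inf_{u,\alpha}HD(y;s_{\mathcal X}(u,\alpha))\geq\inf_{u,\alpha}HD(s_B(u,\alpha);s_{\mathcal X}(u,\alpha))$ is not valid as written: a single constant $y$ cannot in general dominate, inside an infimum, a quantity $s_B(u,\alpha)$ that varies with $(u,\alpha)$ (take $\mathcal X$ degenerate at a fuzzy set with non-constant support function and $B$ equal to that fuzzy set); the comparison has to be made pointwise before taking infima, which is exactly what you do. Second, your appeal to $\med_{Gr}(\mathcal X)\in\Med_s(\mathcal X)$, i.e.\ \eqref{e2}, to certify $\Med_s(\mathcal X)\neq\emptyset$ is the right tool under the stated hypothesis $\mathcal X\in L^{0}[\mathcal{F}_{c}(\mathbb{R})]$, whereas the paper invokes $\med_{Si}(\mathcal X)$ through \eqref{teoremaBea}, a statement established under the $L^1$ assumption that this theorem does not make.
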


A direct implication of Proposition \ref{D1support} and Theorems \ref{Teorema1Mediana} and \ref{teoremaTukey1mediana}, is the following corollary.
\begin{corollary}
Let $\mathcal X\in L^1[\pfc(\R)].$ Then, 
	$\Med(\mathcal X; D_1)=\Med(\mathcal X;D_{FT}).$
\end{corollary}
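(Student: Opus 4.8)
The plan is to obtain the claimed identity by simply chaining together the three results cited just above the statement, checking only that the integrability hypothesis is strong enough for each of them to apply. Since $\mathcal X\in L^1[\pfc(\R)]$, Proposition \ref{D1support} gives $\Med(\mathcal X; D_1)=\Med_1(\mathcal X)$, and Theorem \ref{Teorema1Mediana} gives $\Med_1(\mathcal X)=\Med_s(\mathcal X)$. For the final link I would invoke Theorem \ref{teoremaTukey1mediana}, namely $\Med_s(\mathcal X)=\Med(\mathcal X;D_{FT})$; here I would first remark that $L^1[\pfc(\R)]\subseteq L^0[\pfc(\R)]$, so the weaker hypothesis of that theorem is automatically satisfied. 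Concatenating the three equalities,
\begin{equation*}
\Med(\mathcal X; D_1)=\Med_1(\mathcal X)=\Med_s(\mathcal X)=\Med(\mathcal X;D_{FT}),
\end{equation*}
which is exactly the desired conclusion.

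There is essentially no obstacle to overcome: all the mathematical content already resides in the three preceding results, and the corollary is a purely formal consequence of transitivity of equality among sets. The only point deserving a word of care is the matching of hypotheses across the three statements---two of them (Proposition \ref{D1support} and Theorem \ref{Teorema1Mediana}) require membership in $L^1$, whereas Theorem \ref{teoremaTukey1mediana} asks only for membership in $L^0$---and this mismatch is resolved instantly by the trivial inclusion $L^1[\pfc(\R)]\subseteq L^0[\pfc(\R)]$. In this way the set of support medians $\Med_s(\mathcal X)$ acts as a common bridge that identifies the $D_1$-medians with the Tukey ($D_{FT}$) medians for every $\mathcal X\in L^1[\pfc(\R)]$.
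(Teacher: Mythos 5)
Your proof is correct and follows exactly the route the paper intends: the corollary is stated there as ``a direct implication of Proposition \ref{D1support} and Theorems \ref{Teorema1Mediana} and \ref{teoremaTukey1mediana},'' which is precisely the chain $\Med(\mathcal X; D_1)=\Med_1(\mathcal X)=\Med_s(\mathcal X)=\Med(\mathcal X;D_{FT})$ that you write down. Your added remark that $L^1[\pfc(\R)]\subseteq L^0[\pfc(\R)]$ justifies applying Theorem \ref{teoremaTukey1mediana} is a sensible hypothesis check that the paper leaves implicit.
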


\if0
...................................	
\begin{remark}
	This result shows two key points about the study of 1-median of fuzzy random variables.
	For any $\mathcal{X}$, the set $\arg\min_{U\in\mathcal{F}_{c}(\mathbb{R})}\text{E}[\rho_{1}(U,\mathcal{X})]$ is not empty and any fuzzy set $A\in\mathcal{F}_{c}(\mathbb{R})$ which satisfies that $s_{A}(u,\alpha) = Med(s_{\mathcal{X}}(u,\alpha))$ for all $u\in\mathbb{S}^{0},\alpha\in [0,1]$ is a 1-median of the distribution of $\mathcal{X}$. It is clear, from the fact that the median, $m\in\mathbb{R}$, of a real random variable, $X$, is such that $\text{E}[|X - x|] = \min_{y\in\mathbb{R}}\text{E}[|X - y|]$.
\end{remark}
...................................
\fi

We turn now to $D_{FP}$-medians, which are based on the outlier function $O(\cdot; \mathcal X)$ in \eqref{O}. In order for that function to be well defined, it is required for $\mathcal X\in L^{0}[\mathcal{F}_{c}(\mathbb{R}^p)]$ to be  non-degenerate (i.e., it is not identical almost surely to some constant $A\in \pfc(\R^p)$.)
Next result states that $D_{FP}$-medians are support medians. In fact, the unique $D_{FP}$-median is $\med_{Si}(\mathcal{X})$.

\begin{proposition}
	Let $\mathcal{X}\in L^{0}[\mathcal{F}_{c}(\mathbb{R})]$ be  non-degenerate. Then, 
	\begin{eqnarray}\label{con}
	\Med(\mathcal X;D_{FP}) = \{\med_{Si}(\mathcal{X})\}\subseteq\Med_s(\mathcal X).
	\end{eqnarray}
\end{proposition}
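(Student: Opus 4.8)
The plan is to reduce the maximization of $D_{FP}$ to the minimization of the outlier function $O(\cdot;\mathcal X)$ in \eqref{O} and then to identify its minimizers. Since $D_{FP}(A;\mathcal X)=(1+O(A;\mathcal X))^{-1}$ is a strictly decreasing function of $O(A;\mathcal X)\in[0,\infty]$, maximizing $D_{FP}$ over $\pfc(\R)$ is the same as minimizing $O(\cdot;\mathcal X)$. As $O\geq 0$, we have $D_{FP}\leq 1$ throughout, so the maximal depth is at most $1$, and it is attained exactly at those $A$ for which $O(A;\mathcal X)=0$. Hence the proposition reduces to showing that $O(A;\mathcal X)=0$ holds if and only if $A=\med_{Si}(\mathcal X)$, plus the easy inclusion at the end.

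First I would check that $\med_{Si}(\mathcal X)$ realizes $O=0$. By its defining property, $s_{\med_{Si}(\mathcal X)}(u,\alpha)=\med(s_{\mathcal X}(u,\alpha))$ for every $u\in\mathbb S^{0}$ and $\alpha\in[0,1]$, so every numerator $|s_{\med_{Si}(\mathcal X)}(u,\alpha)-\med(s_{\mathcal X}(u,\alpha))|$ vanishes identically. Each term in the supremum defining $O$ is therefore $0$: where $\text{MAD}(s_{\mathcal X}(u,\alpha))>0$ this is clear, and where the MAD vanishes the convention $0/0=0$ applies. Thus $O(\med_{Si}(\mathcal X);\mathcal X)=0$ and $D_{FP}(\med_{Si}(\mathcal X);\mathcal X)=1$ attains the maximum.

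For the converse I would take any $A\in\pfc(\R)$ with $O(A;\mathcal X)=0$ and show its support function is forced to equal the median pointwise. Since the supremum of the nonnegative ratios equals $0$, each ratio is $0$. At a pair $(u,\alpha)$ with $\text{MAD}(s_{\mathcal X}(u,\alpha))>0$ this immediately gives $s_A(u,\alpha)=\med(s_{\mathcal X}(u,\alpha))$. At a pair where the MAD vanishes, a strictly positive numerator would make the ratio $+\infty$ and hence $O(A;\mathcal X)=+\infty$, contradicting $O(A;\mathcal X)=0$; so the numerator must vanish there too. Thus $s_A(u,\alpha)=\med(s_{\mathcal X}(u,\alpha))=s_{\med_{Si}(\mathcal X)}(u,\alpha)$ for all $(u,\alpha)$, and since an element of $\pfc(\R)$ is determined by its support function, $A=\med_{Si}(\mathcal X)$. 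This yields $\Med(\mathcal X;D_{FP})=\{\med_{Si}(\mathcal X)\}$. The inclusion $\{\med_{Si}(\mathcal X)\}\subseteq\Med_s(\mathcal X)$ is then immediate from the definition of a support median, since $s_{\med_{Si}(\mathcal X)}(u,\alpha)=\med(s_{\mathcal X}(u,\alpha))$ is a median of $s_{\mathcal X}(u,\alpha)$ for every $(u,\alpha)$.

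The main obstacle I anticipate is the careful handling of the directions $(u,\alpha)$ at which $\text{MAD}(s_{\mathcal X}(u,\alpha))=0$: there the ratio in $O$ is of the indeterminate form $0/0$ or $\text{(positive)}/0$, and both halves of the argument rely on fixing the convention that $0/0=0$ while a strictly positive numerator over a vanishing MAD is $+\infty$. This is precisely where non-degeneracy of $\mathcal X$ enters, guaranteeing that $O(\cdot;\mathcal X)$ is well defined and that $O(\med_{Si}(\mathcal X);\mathcal X)=0$ is a genuine minimum rather than an artifact of an ill-defined expression.
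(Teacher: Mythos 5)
Your proof is correct and takes essentially the same route as the paper's own argument (which the paper carries out inside the proof of Theorem \ref{teoremaProjection1mediana}): $O(\med_{Si}(\mathcal X);\mathcal X)=0$ gives the maximal depth value $D_{FP}=1$, while any $A\neq\med_{Si}(\mathcal X)$ must satisfy $s_A(u,\alpha)\neq\med(s_{\mathcal X}(u,\alpha))$ at some $(u,\alpha)$ by injectivity of the support function, forcing $O(A;\mathcal X)>0$ and $D_{FP}(A;\mathcal X)<1$, with the final inclusion following because the midpoint of the interval of medians is itself a median. The only divergence is a point of bookkeeping, not of method: the paper uses non-degeneracy to assert $\mathrm{MAD}(s_{\mathcal X}(u,\alpha))>0$ for every $(u,\alpha)$ and so never meets a zero denominator, whereas you allow zero-MAD pairs and resolve them with the $0/0=0$ and $c/0=+\infty$ conventions.
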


Next result (Theorem \ref{teoremaProjection1mediana}) studies when the inclusion in  \eqref{con} becomes an equality.  For that, the theorem has an assumption of median uniqueness for $s_{\mathcal{X}}(u,\alpha),$ which cannot be removed as the following example shows.

\begin{example}
	Given the probability space $(\{\omega_{1},\omega_{2}\},\mathcal{P}(\{\omega_{1},\omega_{2}\}),\mathbb{P})$, with $\mathbb{P}(\omega_{1}) = \mathbb{P}(\omega_{2})$, let $\mathcal{X}:\{\omega_{1},\omega_{2}\}\rightarrow\mathcal{F}_{c}(\mathbb{R})$ be a fuzzy random variable such that $\mathcal{X}(\omega_{1}) = \text{I}_{\{1\}}$ and $\mathcal{X}(\omega_{2}) = \text{I}_{\{3\}}$.
	
	We have $\med(s_{\mathcal{X}}(1,\alpha)) = [1,3]$ and $\med(s_{\mathcal{X}}(-1,\alpha)) = [-3,-1]$ for every $\alpha\in [0,1]$. Using Theorem \ref{Teorema1Mediana}, the set of 1-medians of $\mathcal{X}$ is
	$$
	\Med_1(\mathcal X) = \{U\in\mathcal{F}_{c}(\mathbb{R}) : s_{U}(u,\alpha)\in [\min\{u,3u\},\max\{u,3u\}]\text{ for all } (u,\alpha)\in\mathbb{S}^{0}\times [0,1]\}
	$$
	
	Let us consider the set $A = \text{I}_{\{2\}}\in\Med_1(\mathcal X)$. By convention about the univariate median in the definition of $D_{FP}$, we take $\med(s_{\mathcal{X}}(u,\alpha)) = 2u$ and $\text{MAD}(s_{\mathcal{X}}(u,\alpha)) = 1$ for all $(u,\alpha)\in\mathbb{S}^{0}\times [0,1]$, thus
	$$
	O(A;\mathcal{X}) = \sup_{(u,\alpha)\in\mathbb{S}^{0}\times [0,1]} |s_{A}(u,\alpha) - 2u| = 0.
	$$
	Thus $D_{FP}(A;\mathcal{X}) = 1$ and $A$ necessarily maximizes the projection depth. Now let us consider $B = T(1,2,3)\in\Med_1(\mathcal X)$. We have $B_{\alpha} = [1+\alpha,3-\alpha]$, $s_{B}(1,\alpha) = 3-\alpha$ and $s_{B}(-1,\alpha) = -1-\alpha$ for every $\alpha\in [0,1]$. Thus
	$$
	O(B;\mathcal{X}) = \sup_{(u,\alpha)\in\mathbb{S}^{0}\times [0,1]} |s_{B}(u,\alpha) - 2| = \sup_{\alpha\in[0,1]} |1-\alpha| = 1
	$$
	Thus $D_{FP}(B;\mathcal{X}) = 1/2 < D_{FP}(A;\mathcal{X})$ but $A,B\in\Med_1(\mathcal X)=\Med_s(\mathcal X)$.
\end{example}

\begin{theorem}\label{teoremaProjection1mediana}
	Let $\mathcal{X}\in L^{0}[\mathcal{F}_{c}(\mathbb{R})]$ be  non-degenerate and such that the median of $s_{\mathcal{X}}(u,\alpha)$ is unique for each $u\in\mathbb{S}^{0}$ and $\alpha\in [0,1]$. Then
	$$\Med_s(\mathcal X)= \Med(\mathcal X;D_{FP})=\{\med_{Si}(\mathcal X)\}=\{\med_{Gr}(\mathcal X)\}.$$
\end{theorem}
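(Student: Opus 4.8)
The plan is to assemble the conclusion from three ingredients already in hand: the preceding Proposition, recorded in~\eqref{con}, which for non-degenerate $\mathcal X\in L^0$ gives $\Med(\mathcal X;D_{FP})=\{\med_{Si}(\mathcal X)\}\subseteq\Med_s(\mathcal X)$; the inclusion~\eqref{e2}, which places $\med_{Gr}(\mathcal X)$ in $\Med_s(\mathcal X)$; and one genuinely new observation, namely that the uniqueness of $\med(s_{\mathcal X}(u,\alpha))$ forces the \emph{a priori} possibly large set $\Med_s(\mathcal X)$ to collapse to a single point. Once that collapse is established, all four sets in the statement are seen to coincide almost immediately, since each of $\med_{Si}(\mathcal X)$, $\med_{Gr}(\mathcal X)$, and the unique $D_{FP}$-median has already been certified to lie inside $\Med_s(\mathcal X)$ by the earlier results.

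First I would prove $\Med_s(\mathcal X)=\{\med_{Si}(\mathcal X)\}$. Let $A\in\Med_s(\mathcal X)$. By the definition of a support median, $s_A(u,\alpha)\in\med(s_{\mathcal X}(u,\alpha))$ for every $u\in\mathbb S^{0}$ and $\alpha\in[0,1]$. Under the uniqueness hypothesis the right-hand side is a single real number, which by the defining property of $\med_{Si}$ equals $s_{\med_{Si}(\mathcal X)}(u,\alpha)$ (here the midpoint convention used in $\med_{Si}$ agrees with the unique median). Hence $s_A$ and $s_{\med_{Si}(\mathcal X)}$ agree on all of $\mathbb S^{0}\times[0,1]$, and since an element of $\pfc(\R)$ is determined by its support function we conclude $A=\med_{Si}(\mathcal X)$. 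This yields $\Med_s(\mathcal X)\subseteq\{\med_{Si}(\mathcal X)\}$; the reverse inclusion is precisely the containment $\{\med_{Si}(\mathcal X)\}\subseteq\Med_s(\mathcal X)$ recorded in~\eqref{con}, so equality holds.

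Next I would close the chain of equalities. The preceding Proposition already gives $\Med(\mathcal X;D_{FP})=\{\med_{Si}(\mathcal X)\}$, so together with the previous step $\Med_s(\mathcal X)=\Med(\mathcal X;D_{FP})=\{\med_{Si}(\mathcal X)\}$. Finally, \eqref{e2} places $\med_{Gr}(\mathcal X)$ in $\Med_s(\mathcal X)$, which we have just shown equals $\{\med_{Si}(\mathcal X)\}$; therefore $\med_{Gr}(\mathcal X)=\med_{Si}(\mathcal X)$ and $\{\med_{Gr}(\mathcal X)\}$ coincides with the other three sets, completing the proof.

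The hard part is conceptual rather than technical: recognizing that median uniqueness is exactly the hypothesis that removes the freedom exploited in the example preceding the theorem, where distinct support medians such as $\text{I}_{\{2\}}$ and $T(1,2,3)$ shared the same $D_{FP}$-maximizing value but only one of them maximized $D_{FP}$. I do not expect any delicate estimates. The only point requiring care is the well-definedness of $\med_{Si}(\mathcal X)$ as a genuine element of $\pfc(\R)$ for a merely $L^{0}$ variable, but this is already guaranteed by the preceding Proposition, whose hypotheses (non-degeneracy and $\mathcal X\in L^{0}$) are implied by ours.
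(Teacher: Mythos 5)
Your proof is correct, and it rests on the same two pillars as the paper's: (i) the median-uniqueness hypothesis collapses $\Med_s(\mathcal X)$ to the single fuzzy set $\med_{Si}(\mathcal X)=\med_{Gr}(\mathcal X)$, and (ii) the identification $\Med(\mathcal X;D_{FP})=\{\med_{Si}(\mathcal X)\}$. The difference is one of decomposition. You treat (ii) as a black box by citing the Proposition recorded in \eqref{con} and spend your effort on (i), which you argue in more detail than the paper does (the paper disposes of (i) in a single sentence, ``By the assumption, the support median is unique and so\dots''). The paper, conversely, derives (ii) inline: non-degeneracy gives $\mathrm{MAD}(s_{\mathcal X}(u,\alpha))>0$, hence $O(\med_{Si}(\mathcal X);\mathcal X)=0$ and $D_{FP}(\med_{Si}(\mathcal X);\mathcal X)=1$ is the maximal value, while any $B\neq\med_{Si}(\mathcal X)$ has $s_B(u,\alpha)\neq\med(s_{\mathcal X}(u,\alpha))$ at some $(u,\alpha)$, so $O(B;\mathcal X)>0$ and $D_{FP}(B;\mathcal X)<1$. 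One caveat about your reliance on \eqref{con}: the paper never gives a standalone proof of that Proposition --- the inline argument in this theorem's proof is effectively where its content gets established --- so while your citation is logically legitimate (the Proposition is a stated prior result, holds without the uniqueness hypothesis, and does not depend on the theorem, hence no circularity), your write-up spliced into the paper as-is would leave \eqref{con} unproved anywhere. Your modular route buys brevity and isolates the genuinely new content of the theorem, namely the collapse (i); the paper's route buys self-containedness.
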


Let us denote by $C^{0}[\mathcal{F}_{c}(\mathbb{R}^{p})]$ the set of all fuzzy random variables $\mathcal{X}\in L^{0}[\mathcal{F}_{c}(\mathbb{R}^p)]$ such that $s_{\mathcal{X}}(u,\alpha)$ is a continuous random variable for each $(u,\alpha)\in\mathbb{S}^{p-1}\times [0,1]$. 
As shown in \cite{simplicialsegundo}, fuzzy generalizations of simplicial depth have nice properties when computed with respect to  $\mathcal{X}\in C^{0}[\mathcal{F}_{c}(\mathbb{R}^{p})].$ In that situation, there is a unique support median that maximizes both the modified simplicial depth and  the fuzzy simplicial depth, in \eqref{ms} and \eqref{fs} respectively.

\begin{theorem}\label{teoremaSimplicial1mediana}
	Let $\mathcal{X}\in C^{0}[\mathcal{F}_{c}(\mathbb{R})]$. Then
	$$\Med_s(\mathcal X)=\Med(\mathcal X;D_{mS})=\Med(\mathcal X;D_{FS})=\{\med_{Si}(\mathcal X)\}=\{\med_{Gr}(\mathcal X)\}.$$
\end{theorem}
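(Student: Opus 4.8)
The plan is to show that all five sets in the chain coincide with the singleton $\{\med_{Si}(\mathcal X)\}$. The backbone of the argument is a pointwise computation of the integrand defining the simplicial depths in \eqref{ms} and \eqref{fs}. Fix $(u,\alpha)\in\mathbb{S}^{0}\times[0,1]$ and write $F=F_{u,\alpha}$ for the distribution function of $Y:=s_{\mathcal X}(u,\alpha)$. Since $p=1$, the relevant order statistics are built from two i.i.d.\ copies $Y_1,Y_2$ of $Y$, so $m(u,\alpha)=\min(Y_1,Y_2)$ and $M(u,\alpha)=\max(Y_1,Y_2)$. For a value $t\in\mathbb{R}$ the event $\{t\in[m(u,\alpha),M(u,\alpha)]\}$ fails exactly when both copies lie strictly on the same side of $t$; because $\mathcal X\in C^{0}[\mathcal{F}_c(\mathbb R)]$ makes $F$ continuous, ties have probability zero and one obtains $\mathbb{P}(t\in[m(u,\alpha),M(u,\alpha)])=2F(t)(1-F(t))$. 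This quantity is at most $1/2$, with equality if and only if $F(t)=1/2$, i.e.\ if and only if $t$ is a median of $Y$; under the continuity hypothesis this median is unique and equals $s_{\med_{Si}(\mathcal X)}(u,\alpha)$.

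Next I would integrate. By Tonelli's theorem, $D_{mS}(A;\mathcal X)=\int_{\mathbb{S}^{0}\times[0,1]}2F_{u,\alpha}(s_A(u,\alpha))\,(1-F_{u,\alpha}(s_A(u,\alpha)))\,d(\mathcal{V}_1\otimes\nu)(u,\alpha)$, and since $\mathcal{V}_1\otimes\nu$ is a probability measure and each integrand is bounded by $1/2$, we get $D_{mS}(A;\mathcal X)\le 1/2$ for every $A$. This bound is attained by $\med_{Si}(\mathcal X)$, whose support function equals the median at every $(u,\alpha)$; hence $\max_U D_{mS}(U;\mathcal X)=1/2$. Equality for a given $A$ forces $s_A(u,\alpha)=\med(s_{\mathcal X}(u,\alpha))$ for $(\mathcal{V}_1\otimes\nu)$-almost every $(u,\alpha)$. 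The same scheme applies to $D_{FS}$: for each of the two directions $u\in\{-1,1\}$ the inner integral over $\alpha$ is at most $1/2$, so $D_{FS}(A;\mathcal X)=\inf_u(\cdots)\le 1/2$, with equality if and only if $s_A(u,\alpha)=\med(s_{\mathcal X}(u,\alpha))$ for a.e.\ $\alpha$ and for both $u$.

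To upgrade these almost-everywhere identities to an honest equality of fuzzy sets, I would invoke the regularity of support functions: since $\mathbb{S}^{0}$ is finite, ``a.e.\ $(u,\alpha)$'' already means ``for each $u$ and a.e.\ $\alpha$,'' and for fixed $u$ both $s_A(u,\cdot)$ and $s_{\med_{Si}(\mathcal X)}(u,\cdot)$ are monotone and left-continuous in $\alpha$; two such functions agreeing off a null set agree everywhere. Hence any maximizer $A$ satisfies $s_A=s_{\med_{Si}(\mathcal X)}$ identically, so $A=\med_{Si}(\mathcal X)$, giving $\Med(\mathcal X;D_{mS})=\Med(\mathcal X;D_{FS})=\{\med_{Si}(\mathcal X)\}$.

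Finally I would close the chain. By uniqueness of the median under continuity, $A\in\Med_s(\mathcal X)$ forces $s_A(u,\alpha)=\med(s_{\mathcal X}(u,\alpha))$ for all $(u,\alpha)$, so $\Med_s(\mathcal X)=\{\med_{Si}(\mathcal X)\}$ as well. For the last equality, the characterization \eqref{propGregor} gives $(\med_{Gr}(\mathcal X))_\alpha=[\underline{\med}(\inf\mathcal X_\alpha),\overline{\med}(\sup\mathcal X_\alpha)]$, which under unique medians reduces to $[\med(\inf\mathcal X_\alpha),\med(\sup\mathcal X_\alpha)]$; this is exactly the $\alpha$-level of $\med_{Si}(\mathcal X)$, whose support function yields $\med(\sup\mathcal X_\alpha)$ at $u=1$ and $-\med(\inf\mathcal X_\alpha)$ at $u=-1$. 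Thus $\med_{Gr}(\mathcal X)=\med_{Si}(\mathcal X)$ and the whole chain collapses to a single singleton. I expect the main obstacle to be the passage from the almost-everywhere coincidence to a genuine identity of fuzzy sets, together with verifying that continuity of $s_{\mathcal X}(u,\alpha)$ indeed secures a unique median, so that the maximizing support function is forced to be $\med_{Si}(\mathcal X)$ rather than merely one of several support medians.
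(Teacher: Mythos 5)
Your pointwise analysis is correct and is the same engine the paper uses: for $p=1$ and $\mathcal{X}\in C^{0}[\mathcal{F}_{c}(\mathbb{R})]$, Lemma \ref{integrandoDFS} gives $\mathbb{P}\left(t\in[m(u,\alpha),M(u,\alpha)]\right)=2F_{u,\alpha}(t)\left(1-F_{u,\alpha}(t)\right)\leq 1/2$, with equality exactly when $F_{u,\alpha}(t)=1/2$, i.e.\ exactly when $t$ is a median of $s_{\mathcal{X}}(u,\alpha)$; integrating, and treating the infimum over $u\in\{-1,1\}$ as you do, one finds that the maximizers of $D_{mS}$ and of $D_{FS}$ are precisely the fuzzy sets $A$ with $s_{A}(u,\alpha)\in\Med(s_{\mathcal{X}}(u,\alpha))$ for $(\upsilon\otimes\nu)$-almost every $(u,\alpha)$. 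The genuine gap is your claim that ``under the continuity hypothesis this median is unique.'' Continuity of a distribution function does not force a unique median: if $X$ is uniform on $[0,1]\cup[2,3]$, then $F_{X}$ is continuous and every point of $[1,2]$ is a median; taking $\mathcal{X}=\text{I}_{\{X\}}$ gives an element of $C^{0}[\mathcal{F}_{c}(\mathbb{R})]$ all of whose support-function median sets are nondegenerate intervals.

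This false claim is load-bearing at every later step. First, the almost-everywhere property you obtain from equality in the bound is ``$s_{A}(u,\alpha)$ is \emph{a} median,'' not ``$s_{A}(u,\alpha)=s_{\med_{Si}(\mathcal{X})}(u,\alpha)$,'' so your upgrade device (two monotone, left-continuous functions agreeing off a null set agree everywhere; note that at $\alpha=0$ you actually need right-continuity) has nothing to bite on: upgrading ``is a median for a.e.\ $(u,\alpha)$'' to ``is a median for all $(u,\alpha)$'' requires the liminf/cdf argument from the paper's proof of Theorem \ref{Teorema1Mediana}, or else one simply quotes Theorem \ref{Teorema1Mediana} to identify the a.e.-median property with membership in $\Med_1(\mathcal{X})=\Med_s(\mathcal{X})$ and argues by contradiction on a set of positive measure, which is exactly what the paper's proof does. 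Second, without uniqueness the singleton identities fail outright: in the example above, $\med_{Si}(\mathcal{X})=\text{I}_{\{3/2\}}$, while \eqref{propGregor} gives $\med_{Gr}(\mathcal{X})=\text{I}_{[1,2]}$, and $\Med_s(\mathcal{X})$ contains every fuzzy set $A$ with $A_{0}\subseteq[1,2]$. What your argument does yield, once the upgrade step is repaired, is the chain $\Med_s(\mathcal{X})=\Med(\mathcal{X};D_{mS})=\Med(\mathcal{X};D_{FS})$ --- which, notably, is also all that the paper's own proof establishes; the further equalities with $\{\med_{Si}(\mathcal{X})\}$ and $\{\med_{Gr}(\mathcal{X})\}$ require the additional hypothesis that each $s_{\mathcal{X}}(u,\alpha)$ has a unique median (the hypothesis made explicitly in Theorem \ref{teoremaProjection1mediana}), and are not consequences of $\mathcal{X}\in C^{0}[\mathcal{F}_{c}(\mathbb{R})]$ alone.
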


If the fuzzy random variable $\mathcal X\not\in C^{0}[\mathcal{F}_{c}(\mathbb{R})]$, both $\Med_s(\mathcal X)\neq \Med(\mathcal X;D_{FS})$ and $\Med_s(\mathcal X)\neq \Med(\mathcal X;D_{mS})$ can happen, as shown by the following example.

\begin{example}\label{ejemplosimpli}
	Let $X$ be a real random variable with cumulative distribution function
	\begin{equation}\nonumber
		\mathbb{F}_{X}(x) := \left\{ \begin{array}{lcr}
			0,  & \text{ if } x\in (-\infty,0) 
			\\ 
			.145x,  &  \text{ if } x\in [0,2) 
			\\ 
			.01x + .47, & \text{ if } x\in [2,53)
			\\ 
			1, & \text{ if } x\in [53,\infty).
		\end{array}
		\right.
	\end{equation}
Note that %
$\Med(X) = \{3\},$ which implies, setting $\mathcal{X} = \text{I}_{\{X\}}\in L^0[\mathcal{F}_{c} (\mathbb{R}^p)]$,
\begin{equation}\label{s3}
\Med_s(\mathcal{X}) = \left\{\text{I}_{\{3\}}\right\}.
\end{equation}
 Let us consider the simplicial depth based on $L^0[\mathcal{F}_{c} (\mathbb{R}^p)]$ and $\mathcal{F}_{c}(\mathbb{R})$. Using \citep[Proposition 5.1]{simplicialsegundo} (reproduced as Lemma \ref{integrandoDFS} below), 
$$
D_{FS}\left(\text{I}_{\{x\}};\mathcal{X}\right) = 1 - (1-\mathbb{F}_{X}(x))^2 - (\mathbb{F}_{X}(x) - \mathbb{P}(X = x))^2,
$$
and, therefore,
$$
D_{FS}(\text{I}_{\{3\}};\mathcal{X}) = .5
\mbox{ 
and
}
D_{FS}(\text{I}_{\{2\}};\mathcal{X}) = .6558.
$$
Taking into account \eqref{s3}, that implies  $\Med_s(\mathcal{X})\neq\Med(\mathcal{X};D_{FS})$ because $I_{\{3\}}$ does not maximize $D_{FS}$. The case of $D_{mS}$ is done analogously.
\end{example}

\begin{remark}
	One of the main uses of depth functions is to generalize the concept of a median as the element with maximal depth. It follows from the results in this section that, for $p = 1$, the confluence of the notions of 1-median, support median and depth median for the depth functions $D_{FT}$, $D_{FP}$, $D_{1}$, $D_{mS}$ and $D_{FS}$, is nice and natural. The fact that support medians may not exist in the case $p>1$ underlines that depth medians are a reasonable approach to defining medians for multivariate fuzzy data. 

In our study of the case $p=1$, in some cases we have resorted to additional assumptions which seem to be reasonable. For example, the definition of projection depth presupposes that the univariate median is taken to be the midpoint of all possible medians, whereas the definition of a support median allows for arbitrary medians; studying the relationship between both concepts under the assumption of a unique median is natural. In the case of simplicial depths, they are generalizations of multivariate simplicial depth which enjoys good properties for continuous variables, whence restricting ourselves to $C^0[\pfc(\R^p)]$ is natural.
\end{remark}

Next we present two examples in which the set of medians of a fuzzy random variable is computed explicitly. 

\begin{example}
	Let $(\{\omega_{1},\omega_{2},\omega_{3},\omega_{4}\}, \mathcal{P}(\{\omega_{1},\omega_{2},\omega_{3},\omega_{4}\}),\mathbb{P})$ be a probabilistic space such that $\mathbb{P}(\omega_{1}) = \mathbb{P}(\omega_{2}) = \mathbb{P}(\omega_{3}) = \mathbb{P}(\omega_{4})$. Let $\mathcal{X} : \Omega\rightarrow\mathcal{F}_{c}(\mathbb{R})$ be a fuzzy random variable defined by $\mathcal{X}(\omega_{1}) = T(1,2,3)$, $\mathcal{X}(\omega_{2}) = \text{I}_{\{4\}}$, $\mathcal{X}(\omega_{3}) = \text{I}_{\{5\}}$ and $\mathcal{X}(\omega_{4}) = T(6,6,7)$. By Theorem \ref{Teorema1Mediana}, the set of $1$-medians is the set of support medians:
	$$
	\text{Med}_1(\mathcal X) = \{A\in\mathcal F_c(\mathbb R) : s_A(u,\alpha)\in\text{Med}(s_{\mathcal X}(u,\alpha)) \text{ for every } u\in\mathbb S^0,\alpha\in [0,1]\}
	$$
	
	The support function of $\mathcal X(\omega_i)$, with $i\in\{1,2,3,4\}$, has the following expressions:
	
	\begin{equation}\nonumber
		s_{\mathcal X(\omega_1)} (u,\alpha)  = \left\{ \begin{array}{lcc}
			3-\alpha  & \text{ if } u = 1, \alpha\in [0,1], \\
			\\
			-1-\alpha &  \text{ if } u = -1,\alpha\in [0,1] \\
		\end{array}
		\right.
	\end{equation}

	\vspace{-1cm}
	
	\begin{equation}\nonumber
	s_{\mathcal X(\omega_2)} (u,\alpha) = 4u
\end{equation}

	\vspace{-0.3cm}

\begin{equation}\nonumber
	s_{\mathcal X(\omega_3)} (u,\alpha) = 5u
\end{equation}
	
	and
	
	\begin{equation}\nonumber
		s_{\mathcal X(\omega_4)} (u,\alpha)  = \left\{ \begin{array}{lcc}
			7-\alpha  & \text{ if } u = 1, \alpha\in [0,1], \\
			\\
			-6 &  \text{ if } u = -1,\alpha\in [0,1] \\
		\end{array}
		\right.
	\end{equation}
	
	Therefore
	$$
	\text{Med}(s_{\mathcal X}(1,\alpha)) = \text{Med}(\{3-\alpha, 4,5,7-\alpha\}) = [4,5]
	$$
	and
	$$
	\text{Med}(s_{\mathcal X}(-1,\alpha)) = \text{Med}(\{-1-\alpha, -4,-5,-6\}) = [-5,-4]
	$$
	
	Thus, the set of $1$-medians is
	$$
	\text{Med}_1(\mathcal X) = \{M\in\mathcal{F}_{c}(\mathbb{R}) : M_\alpha\subseteq [4,5] \text{ for all }\alpha\in [0,1]\}
	$$

	By Theorem \ref{teoremaTukey1mediana}, the above set is also the set of maximizers of $D_{FT}$. It is graphically represented in Figure \ref{figura1simetria}.
\end{example}

\begin{figure}[h]
	\centering
	\includegraphics[scale=0.5]{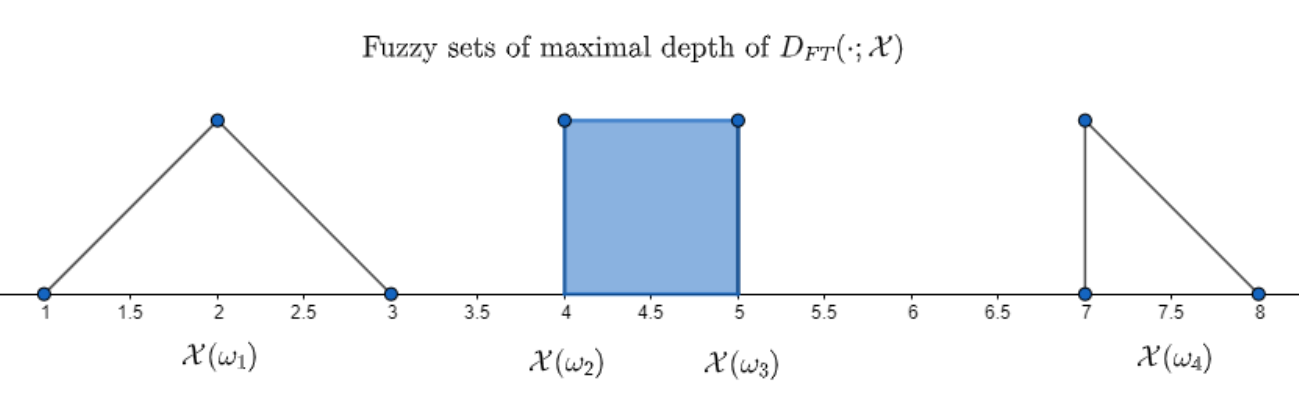}
	\caption{Visual representation of the set of fuzzy sets maximizing $D_{FT}(\cdot;\mathcal{X})$. Every fuzzy set `contained' in the blue area has maximal depth.}
\label{figura1simetria}
\end{figure}

\begin{example}
	Let $\Omega = \{\omega_{1},\omega_{2},\omega_{3},\omega_{4}\}$ and $(\Omega, \mathcal{P}(\Omega),\mathbb{P})$ be a probabilistic space such that $\mathbb{P}(\omega_{1}) = \mathbb{P}(\omega_{2}) = \mathbb{P}(\omega_{3}) = \mathbb{P}(\omega_{4})$. Let $\mathcal{X} : \Omega\rightarrow\mathcal{F}_{c}(\mathbb{R})$ be a fuzzy random variable defined by $\mathcal{X}(\omega_{1}) = T(1,2,3)$, $\mathcal{X}(\omega_{2}) = T(4,4,5)$, $\mathcal{X}(\omega_{3}) = T(6,7,8)$ and $\mathcal{X}(\omega_{4}) = \text{I}_{\{9\}}$.
\end{example}

Let us compute the set of support medians. For that, we compute the $\alpha$-levels for each $\mathcal{X}(\omega_{i}),$ $i=1, \ldots, 4,$ obtaining, for any $\alpha\in[0,1],$
$(\mathcal{X}(\omega_{1}))_{\alpha} = [\alpha + 1 , 3 - \alpha],$ 
		$(\mathcal{X}(\omega_{2}))_{\alpha} = [4, 5-\alpha],$ 
		$(\mathcal{X}(\omega_{3}))_{\alpha} = [\alpha + 6, 8 - \alpha]\mbox{ and }
(\mathcal{X}(\omega_{4}))_{\alpha} = \{9\}.$
Then the support functions of those fuzzy sets are
\begin{equation}\nonumber
	s_{\mathcal X(\omega_1)} (u,\alpha)  = \left\{ \begin{array}{lcc}
		3-\alpha  & \text{ if } u = 1, \alpha\in [0,1], \\
		-1-\alpha &  \text{ if } u = -1,\alpha\in [0,1] 
	\end{array}
	\right.
\end{equation}
\begin{equation}\nonumber
	s_{\mathcal X(\omega_2)} (u,\alpha)  = \left\{ \begin{array}{lcc}
		5-\alpha  & \text{ if } u = 1, \alpha\in [0,1], \\
		-4 &  \text{ if } u = -1,\alpha\in [0,1] 
	\end{array}
	\right.
\end{equation}
\begin{equation}\nonumber
	s_{\mathcal X(\omega_3)} (u,\alpha)  = \left\{ \begin{array}{lcc}
		8-\alpha  & \text{ if } u = 1, \alpha\in [0,1], \\
		-6-\alpha &  \text{ if } u = -1,\alpha\in [0,1] 
	\end{array}
	\right.
\end{equation}
and
$
s_{\mathcal X(\omega_4)} (u,\alpha) = 9u.
$
Consequently, for all $\alpha\in [0,1],$
$$
\text{Med}(s_{\mathcal X}(1,\alpha)) = \text{Med}(\{3-\alpha,5-\alpha,8-\alpha,9\}) = [5-\alpha,8-\alpha]
$$
and
$$
\text{Med}(s_{\mathcal X}(-1,\alpha)) = \text{Med}(\{-1-\alpha,-4,-6-\alpha,-9\}) = [-4,-6-\alpha].
$$
Thus,  the set of support medians, which coincides with the 1-medians (Theorem \ref{Teorema1Mediana}) and the Tukey medians (Theorem \ref{teoremaTukey1mediana}) is
\begin{equation}\nonumber
	\Med_1(\mathcal X) = \{A\in\mathcal{F}_{c}(\mathbb{R}) : \inf A_{\alpha}\in [4, 6 + \alpha], \sup A_{\alpha}\in [5-\alpha, 8 - \alpha] \text{ for all } \alpha\in [0,1]\},
\end{equation}
displayed in Figure \ref{figura2simetria}.

\begin{figure}[h]
	\centering
	\includegraphics[scale=0.5]{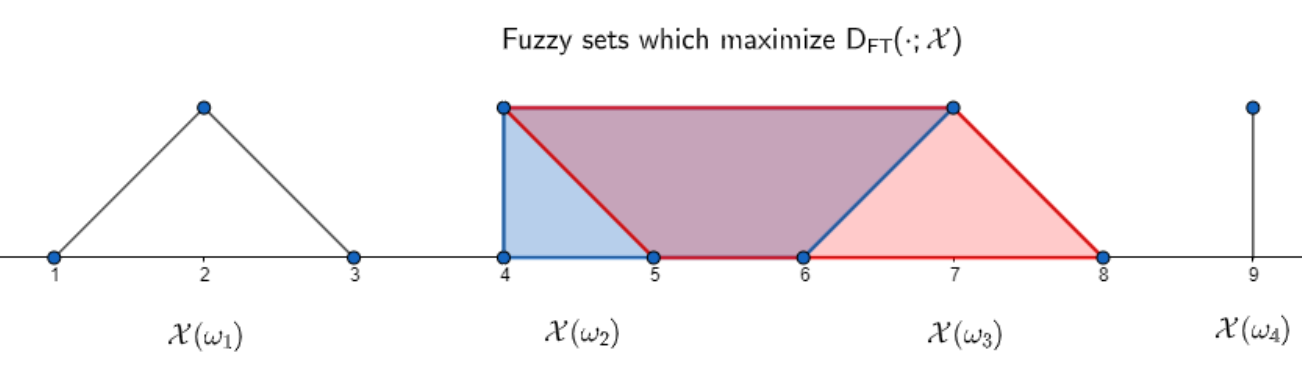}
	\caption{Illustration (in blue and red) of the set of medians of $\mathcal X$. The areas where the left and right slope of a median must be contained are represented in different colors.}
	\label{figura2simetria}
\end{figure}

\section{Proofs}\label{proofs}

\begin{proof}[Proof of Proposition \ref{gregorsupport}]
	Let $\mathcal{X}$ be a fuzzy random variable in $\R$ and let $A\in\Med_{s}(\mathcal{X})$ be a support median of $\mathcal{X}$. The $\alpha$-level of $A$ can be expressed in terms of its support function as
	$$
	A_{\alpha} = [-s_{A}(-1,\alpha), s_{A}(1,\alpha)] = [\inf A_{\alpha},\sup A_{\alpha}]
	$$
	whenever $\alpha\in [0,1]$.
	By definition $s_{A}(u,\alpha)\in\Med (s_{\mathcal{X}}(u,\alpha))$ whenever $u\in\mathbb{S}^{0}, \alpha\in [0,1]$. Thus
	\begin{equation}\nonumber
		\begin{aligned}
			-s_{A}(-1,\alpha)&\in\Med(-s_{\mathcal{X}}(-1,\alpha)) = \Med(\inf \mathcal{X}_{\alpha})\\
			s_{A}(1,\alpha)&\in\Med(s_{\mathcal{X}}(1,\alpha)) = \Med(\sup \mathcal{X}_{\alpha})
		\end{aligned}
	\end{equation}
	
	Let $\med_{Gr}(\mathcal{X})$ be the fuzzy median of $\mathcal{X}$ in  \eqref{medianaGregor}. By \eqref{propGregor}, 
$$\underline{\med}(\inf\mathcal{X}_{\alpha})\leq -s_{A}(-1,\alpha)\mbox{ and }s_{A}(1,\alpha)\leq \overline{\med}(\sup\mathcal{X}_{\alpha}).$$ Thus $A_{\alpha}\subseteq(\med_{Gr}(\mathcal{X}))_{\alpha}$ for every $\alpha\in [0,1]$. Consequently, the support median $A$ is contained in $\med_{Gr}(\mathcal{X})$.
\end{proof}

The next characterization is used for the proof of Theorem \ref{Teorema1Mediana}.

\begin{lemma}[\cite{ResultadoMedianaR}]\label{teoremamediana}
	Let $X$ be a real random variable and $x\in\mathbb{R}$. Then, $x\in \Med(X)$ if and only if
	$$\text{E}[|X - y|] = \inf_{z\in\mathbb{R}}\text{E}[|X - z|]$$
\end{lemma}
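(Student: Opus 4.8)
The plan is to recognize the statement as the classical characterization of the median as a minimizer of the mean absolute deviation, and to prove it by elementary pointwise estimates on $|X-z|-|X-x|$, which avoids any appeal to Fubini's theorem or to an integral representation of the objective. Write $g(z):=\text{E}[|X-z|]$; since the lemma is applied to integrable variables one may assume $\text{E}|X|<\infty$, so that $g$ is finite everywhere. Recall that $x\in\Med(X)$ means exactly $\mathbb{P}(X\le x)\ge 1/2$ and $\mathbb{P}(X\ge x)\ge 1/2$, equivalently $F(x^{-})\le 1/2\le F(x)$ with $F$ the distribution function of $X$; the goal is to show this is equivalent to $g(x)=\inf_{z}g(z)$.

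First I would establish, by inspecting the three cases $X\le x$, $x<X<z$ and $X\ge z$, the pointwise sandwich valid for $z>x$,
$$(z-x)(2\mathbf{1}_{\{X\le x\}}-1)\le |X-z|-|X-x|\le (z-x)(2\mathbf{1}_{\{X<z\}}-1),$$
together with the mirror inequalities for $z<x$ obtained by interchanging $x$ and $z$. Taking expectations (legitimate because every integrand is bounded by $|z-x|$) yields
$$(z-x)(2F(x)-1)\le g(z)-g(x)\le (z-x)(2F(z^{-})-1),\qquad z>x,$$
and the corresponding bounds for $z<x$, whose lower bound carries the factor $2\mathbb{P}(X\ge x)-1$.

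For the forward implication I would assume $x\in\Med(X)$ and read $g(z)\ge g(x)$ off the lower bounds: for $z>x$ the factor $2F(x)-1$ is nonnegative, for $z<x$ the factor $2\mathbb{P}(X\ge x)-1$ is nonnegative, so in each case the product with $(z-x)$, resp.\ $(x-z)$, is nonnegative and $x$ minimizes $g$. For the converse I argue by contraposition: if $x\notin\Med(X)$ then $F(x)<1/2$ or $F(x^{-})>1/2$. In the first case, right-continuity of $F$ gives $z>x$ with $F(z^{-})\le F(z)<1/2$, and the upper bound forces $g(z)<g(x)$; the second case is symmetric, using $F(x^{-})=\lim_{z\uparrow x}F(z)>1/2$ to pick $z<x$ with $F(z)>1/2$. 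Either way $x$ is not a minimizer.

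The step I would watch most carefully is the bookkeeping of strict versus non-strict inequalities in the converse: one must keep $F(x)=\mathbb{P}(X\le x)$ and $F(x^{-})=\mathbb{P}(X<x)$ distinct and use right-continuity to produce a genuinely strict deficit $g(z)<g(x)$ rather than merely $g(z)\le g(x)$. The case analysis behind the pointwise inequalities is routine once the correct indicator bounds are written down, and the passage to expectations is immediate.
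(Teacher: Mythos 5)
Your proof is correct, but note that the paper itself offers no proof of this lemma: it is stated with a citation to DeGroot's \emph{Optimal Statistical Decisions} and used as a known fact, so there is no internal argument to compare against. Your pointwise-sandwich derivation is a sound, self-contained substitute for that citation. The three-case verification of
$$(z-x)\bigl(2\mathbf{1}_{\{X\le x\}}-1\bigr)\;\le\; |X-z|-|X-x|\;\le\; (z-x)\bigl(2\mathbf{1}_{\{X<z\}}-1\bigr),\qquad z>x,$$
is valid (with equality in the outer cases $X\le x$ and $X\ge z$), passing to expectations is legitimate because the integrands are bounded by $|z-x|$, and the contrapositive direction correctly exploits right-continuity of $F$ together with the distinction between $F(x)=\mathbb{P}(X\le x)$ and $F(x^{-})=\mathbb{P}(X<x)$ to produce a strict deficit $g(z)<g(x)$. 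Two of your side observations deserve emphasis. First, the lemma as printed contains a typo: $\text{E}[|X-y|]$ should read $\text{E}[|X-x|]$, and your reading is the intended one. Second, your integrability caveat is not cosmetic: if $\text{E}|X|=\infty$ then $\text{E}[|X-z|]=\infty$ for every $z$, so the displayed identity holds vacuously for every $x\in\mathbb{R}$ while $\Med(X)$ is a proper subinterval of $\mathbb{R}$; hence the ``minimizer implies median'' direction genuinely fails without a moment assumption, and the lemma is only correct as stated on integrable variables. This suffices for the paper, which applies it to $s_{\mathcal{X}}(u,\alpha)$ with $\mathcal{X}\in L^{1}[\mathcal{F}_{c}(\mathbb{R})]$. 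Your own bounds also show how to remove the hypothesis entirely: since $||X-z|-|X-x||\le |z-x|$ pointwise, the quantity $\text{E}[|X-z|-|X-x|]$ is always finite, and the statement ``$x\in\Med(X)$ if and only if $\text{E}[|X-z|-|X-x|]\ge 0$ for all $z\in\mathbb{R}$'' holds for arbitrary real random variables by the identical argument.
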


\begin{proof}[Proof of Theorem \ref{Teorema1Mediana}]
	Let $\mathcal{X}$ be a fuzzy random variable. 	The proof of the result in \eqref{teoremaBea} (\cite[Theorem 4.1]{medianfuzzy1}) already shows that $\Med_s(\mathcal X)\subseteq\Med_1(\mathcal X)$ for every fuzzy random variable $\mathcal{X}$. That is so because the proof that $\med_{Si}(\mathcal X)$ is a $1$-median only uses the fact that the endpoints of each $\alpha$-cut are medians. Notice that $\inf \mathcal X_\alpha=-s_{\mathcal X}(-1,\alpha)$ and $\sup \mathcal X_\alpha=s_{\mathcal X}(1,\alpha)$, so reasoning with the endpoints is equivalent to reasoning with the support function.

	There remains to prove $\Med_1(\mathcal X)\subseteq\Med_s(\mathcal X)$. Reasoning by contradiction, assume there exists $B\in \Med_1(\mathcal X)\backslash \Med_s(\mathcal X)$. Then the set
	$$K = \{(u,\alpha)\in\mathbb{S}^{0}\times [0,1] : s_{B}(u,\alpha)\not\in \Med (s_{\mathcal{X}}(u,\alpha))\}$$
	is not empty. We consider the measure $\upsilon\otimes\nu$ over the set $\mathbb{S}^{0}\times [0,1]$, where $\upsilon$ denotes the uniform distribution over $\mathbb{S}^{0}$ and $\nu$ denotes the Lebesgue measure over $[0,1]$.
	
	Fix any $A\in \Med_s(\mathcal X)$. Clearly,
	$$\text{E}[\rho_{1}(\mathcal{X}, B)] = \text{E}[\rho_{1}(\mathcal{X},A)]$$
	
	Using Fubini's theorem, which is possible due to the joint measurability of $s_{\mathcal X(\cdot)}(\cdot,\cdot)$ in its three arguments \cite[Proposition 4.4]{jointPedroMiriam},
	
	\begin{equation}\label{ecuacion1resultadomedianas}
		\int_{\mathbb{S}^{0}}\int_{[0,1]}\text{E}[|s_{\mathcal{X}}(u,\alpha) - s_{B}(u,\alpha)|] dud\alpha = \int_{\mathbb{S}^{0}}\int_{[0,1]}\text{E}[|s_{\mathcal{X}}(u,\alpha) - s_{A}(u,\alpha)|] dud\alpha
	\end{equation}
	
	By Lemma \ref{teoremamediana}, if $x\in\mathbb{R}$ is not a median of a real random variable $X$, then $\text{E}[|X - x|] > \inf_{y}\text{E}[|X - y|]$. Using \eqref{ecuacion1resultadomedianas}, we deduce that $K$ has $(\upsilon\otimes\nu)$-measure $0$, that is $(\upsilon\otimes\nu) (K) = 0$. 
	Set $H = \mathbb{S}^{0}\times [0,1]\setminus K$. Thus $H$ has $(\upsilon\otimes\nu)$-measure $1$ and $H$ is a dense subset of $\mathbb{S}^{0}\times [0,1]$. As $\mathbb{S}^{0} = \{-1,1\}$, we can express $H$ as the union of  $\{1\}\times U_{1}$ and $\{-1\}\times U_{-1}$, where $U_{1}$ and $U_{-1}$ are dense subsets of $[0,1]$.
	
	Since $K$ is non-empty, there is some $(u,\alpha)\in K$. Without loss of generality, we assume $u = 1$ (if $u=-1$ the argument is analogous or, alternatively, the same argument applies to $-\mathcal X$). We distinguish two cases, at the end of each we will reach a contradiction.
	
	\textbf{Case I.} ($\alpha > 0$) As $U_{1}$ is dense in $[0,1]$, there exists a non-decreasing sequence $\{\alpha_{n}\}_{n}$ of elements of $U_{1}$ such that $(1,\alpha_{n})\in H$ for all $n\in\mathbb{N}$ and $\lim_{n}\alpha_{n} = \alpha$. As $\{\alpha_{n}\}_{n}$ is non-decreasing,  $\{s_{B}(1,\alpha_{n})\}_{n}$ is a non-increasing sequence with $s_{B}(1,\alpha_{n})\geq s_{B}(1,\alpha)$ for all $n\in\mathbb{N}$. By \cite{MingSupport}, $s_{U}(\cdot,\alpha)$ is left-continuous  for all $\alpha\in [0,1]$ and for all $U\in\mathcal{F}_{c}(\mathbb{R})$. Thus $\lim_{n} s_{B}(1,\alpha_{n}) = s_{B}(1,\alpha)$. Analogously, $\lim_{n} s_{\mathcal{X}(\omega)}(1,\alpha_{n}) = s_{\mathcal{X}(\omega)}(1,\alpha)$ for each $\omega\in\Omega$.
	
	First,
	\begin{equation}
		\begin{aligned}\nonumber
			&\mathbb{P}(s_{\mathcal{X}}(1,\alpha) < s_{B}(1,\alpha)) \leq \lim\inf_{n}\mathbb{P}(s_{\mathcal{X}}(1,\alpha_{n}) < s_{B}(1,\alpha))\leq \\ \nonumber
			&\lim\inf_{n}\mathbb{P}(s_{\mathcal{X}}(1,\alpha_{n}) < s_{B}(1,\alpha_{n})) = \lim\inf_{n} \left(1 - \mathbb{P}(s_{\mathcal{X}}(1,\alpha_{n})\geq s_{B}(1,\alpha_{n}))\right)\leq \cfrac{1}{2},
		\end{aligned}
	\end{equation}
	where the second inequality is due to $s_{B}(1,\alpha)\leq s_{B}(1,\alpha_{n})$ for all $n\in\mathbb{N}$ and third inequality to $(1,\alpha_{n})\in H$, thus $s_{B}(1,\alpha_{n})\in\Med(s_{\mathcal{X}}(1,\alpha_{n}))$ for all $n\in\mathbb{N}$. Hence $\mathbb{P}(s_{\mathcal{X}}(1,\alpha)\geq s_{B}(1,\alpha))\geq 1/2$.
	
	Denoting by $F_{u,\alpha}$ the cumulative distribution function of the real random variable $s_{\mathcal{X}}(u,\alpha)$, 
	\begin{equation}\nonumber
		\cfrac{1}{2}\leq\mathbb{P}(s_{\mathcal{X}}(1,\alpha_{n})\leq s_{B}(1,\alpha_{n}))\leq \mathbb{P}(s_{\mathcal{X}}(1,\alpha)\leq s_{B}(1,\alpha_{n})) = F_{1,\alpha}(s_{B}(1,\alpha_{n})),
	\end{equation}
	where the first inequality is due to $(1,\alpha_{n})\in H$ and second one holds because $s_{\mathcal{X}(\omega)}(1,\alpha_{n})\geq s_{\mathcal{X}(\omega)}(1,\alpha)$ for all $n\in\mathbb{N}$ and $\omega\in\Omega$. Taking limits at both sides,
	\begin{equation}\nonumber
		\cfrac{1}{2}\leq \lim_{n\rightarrow\infty} F_{1,\alpha}(s_{B}(1,\alpha_{n})) = F_{1,\alpha}(s_{B}(1,\alpha)),
	\end{equation}
	where the equality is due to the continuity from the right of the cumulative distribution function. Thus $s_{B}(1,\alpha)\in\Med(s_{\mathcal{X}}(1,\alpha))$ but $(1,\alpha)\in K$, a contradiction.
	
	\textbf{Case II.}($\alpha = 0$) As $U_{1}$ is dense in $[0,1]$, there exists a non-increasing sequence $\{\alpha_{n}\}_{n}$ such that $(1,\alpha_{n})\in H$ for all $n\in\mathbb{N}$ and $\lim_{n}\alpha_{n} = 0$. Then, the sequence $\{s_{U}(\alpha_{n})\}_{n}$ is non-decreasing and such that $s_{U}(1,\alpha_{n})\leq s_{U}(1,0)$ for all $n\in\mathbb{N}$ and for all $U\in\mathcal{F}_{c}(\mathbb{R})$. By \cite{MingSupport}, the function $s_{U}(\cdot,\alpha)$ is right-continuous at $\alpha = 0$, thus
	\begin{equation}\nonumber
		\lim_{n\rightarrow\infty} s_{U}(1,\alpha_{n}) = s_{U}(1,0),
	\end{equation}
	for all $U\in\mathcal{F}_{c}(\mathbb{R})$.
	
	We prove that $s_{B}(1,0)\in\Med(s_{\mathcal{X}}(1,0))$. First,
	\begin{equation}
		\begin{aligned}\nonumber
			&\mathbb{P}(s_{\mathcal{X}}(1,0) > s_{B}(1,0))\leq\lim\inf_{n}\mathbb{P}(s_{\mathcal{X}}(1,\alpha_{n}) > s_{B}(1,0))\leq \\ \nonumber
			&\mathbb{P}(s_{\mathcal{X}}(1,\alpha_{n}) > s_{B}(1,\alpha_{n})) = \lim\inf_{n} \left(1 - \mathbb{P}(s_{\mathcal{X}}(1,\alpha_{n}))\right)\leq s_{B}(1,\alpha_{n}))\leq \cfrac{1}{2},
		\end{aligned}
	\end{equation}
	where the second inequality is due to $s_{B}(1,\alpha_{n})\leq s_{B}(1,0)$ for all $n\in\mathbb{N}$ and the third inequality to the fact that $(1,\alpha_{n})\in H$. This implies that $\mathbb{P}(s_{\mathcal{X}}(1,0)\leq s_{B}(1,0))\geq 1/2$.
	
	On the other hand, 
	\begin{equation}
		\begin{aligned}\nonumber
			&\cfrac{1}{2}\leq\mathbb{P}(s_{\mathcal{X}}(1,\alpha_{n})\geq s_{B}(1,\alpha_{n}))\leq\mathbb{P}(s_{\mathcal{X}}(1,0)\geq s_{B}(1,\alpha_{n})) = \\ \nonumber
			&\mathbb{P}(s_{\mathcal{X}}(1,0)\geq s_{B}(1,0)) + \mathbb{P}(s_{\mathcal{X}}(1,0)\in (s_{B}(1,\alpha_{n}), s_{B}(1,0)]),
		\end{aligned}
	\end{equation}
	where the first inequality is due to $(1,\alpha_{n})\in H$ and the second holds because $s_{U}(1,\alpha_{n})\leq s_{U}(1,0)$ for all $U\in\mathcal{F}_{c}(\mathbb{R})$ and for all $n\in\mathbb{N}$. If we take limits, it is clear that
	\begin{equation}\nonumber
		\lim_{n\rightarrow\infty}\mathbb{P}(s_{\mathcal{X}}(1,0)\in (s_{B}(1,\alpha_{n}), s_{B}(1,0)]) = 0,
	\end{equation}
	because $\lim_{n} s_{B}(1,\alpha_{n}) = s_{B}(1,0)$. Thus $\mathbb{P}(s_{\mathcal{X}}(1,0)\geq s_{B}(1,0))\geq 1/2$ and $s_{B}(1,0)\in\Med(s_{\mathcal{X}}(1,0))$. That implies $(1,0)\not\in K$, a contradiction.
	
	We conclude from this that $\Med_1(\mathcal X)\backslash \Med_s(\mathcal X)$ is empty, which completes the proof.
\end{proof}

	The following property, which relies on the fact that a convex combination of $x,y\in\R$ is always between $x$ and $y$, is used in the proof of Theorem \ref{D1rho1}.
	
	\begin{lemma}\label{lema1}
		Let $x,y\in\mathbb{R}$ and $\lambda\in [0,1]$. Then
		$$		\cfrac{1}{1+\lambda}\cdot(x + \lambda\cdot y)\in [\min\{x,y\},\max\{x,y\}].	$$
	\end{lemma}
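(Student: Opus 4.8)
The plan is to recognize the left-hand side as a convex combination of $x$ and $y$, and then invoke the elementary fact that any convex combination of two reals lies in the closed interval they span. Concretely, I would first distribute the scalar $\frac{1}{1+\lambda}$ over $x+\lambda y$ to write the quantity as $\frac{1}{1+\lambda}\,x+\frac{\lambda}{1+\lambda}\,y$, and then set $t=\frac{\lambda}{1+\lambda}$ so that the expression takes the canonical form $(1-t)x+ty$ (using that $\frac{1}{1+\lambda}=1-t$).

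Next I would check that $t$ is a legitimate convex weight. Since $\lambda\in[0,1]$, the denominator satisfies $1+\lambda\geq 1>0$, so it never vanishes and both coefficients $\frac{1}{1+\lambda}$ and $\frac{\lambda}{1+\lambda}$ are nonnegative; they visibly sum to $1$. Hence $t\in[0,1]$ and $(1-t)x+ty$ is a genuine convex combination.

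Finally I would apply the standard bound that, for $t\in[0,1]$, the point $(1-t)x+ty$ satisfies $\min\{x,y\}\leq (1-t)x+ty\leq\max\{x,y\}$; if one wants this spelled out rather than cited, it follows by writing $(1-t)x+ty-\min\{x,y\}$ and $\max\{x,y\}-\big((1-t)x+ty\big)$ as nonnegative multiples of $|x-y|$. This yields exactly the claimed membership. There is no genuine obstacle here: the statement is an immediate instance of the convexity of an interval, and the only point requiring a word of care is confirming that $1+\lambda\neq 0$ on $[0,1]$, which is trivially true.
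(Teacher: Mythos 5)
Your proof is correct and matches the paper's approach exactly: the paper states this lemma without a written proof, remarking only that it ``relies on the fact that a convex combination of $x,y\in\mathbb{R}$ is always between $x$ and $y$,'' which is precisely the convex-combination rewriting $\frac{1}{1+\lambda}x+\frac{\lambda}{1+\lambda}y=(1-t)x+ty$ with $t=\frac{\lambda}{1+\lambda}\in[0,1]$ that you spell out. No further comment is needed.
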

	
	\begin{proof}[Proof of Theorem \ref{D1rho1}]
		Let $A,B,C\in\mathcal{F}_{c}(\mathbb{R})$ be fuzzy sets such that
		\begin{equation}\label{ecuacion1}
			\rho_{1}(A,B) = \rho_{1}(A,C) + \rho_{1}(B,C)
		\end{equation}
		and $A$ maximizes $D_{1}$. By  \eqref{teoremaBea}, such $A$ indeed exists and using Theorem \ref{Teorema1Mediana}, $s_{A}(u,\alpha)\in\text{Med}(s_{\mathcal{X}}(u,\alpha))$ for every $u\in\mathbb{S}^{0}$ and $\alpha\in [0,1]$. Thus  
		\begin{equation}\label{minimomediana}
			s_{A}(u,\alpha)\in\arg\min_{y\in\mathbb{R}}\text{E}[|y - s_{\mathcal{X}}(u,\alpha)|]
		\end{equation}
		for all $u\in\mathbb{S}^{0}$ and $\alpha\in [0,1]$.
		By \eqref{ecuacion1},
		
		\begin{equation}
			\begin{aligned}\nonumber	
				&\int_{\mathbb{S}^{p-1}}\int_{[0,1]} |s_{A}(u,\alpha) - s_{B}(u,\alpha)|dud\alpha = \\ \nonumber &\int_{\mathbb{S}^{p-1}}\int_{[0,1]}(|s_{A}(u,\alpha) - s_{C}(u,\alpha)| + |s_{B}(u,\alpha) - s_{C}(u,\alpha)|) dud\alpha.
			\end{aligned}
		\end{equation}
		Thus
		$$
		\int_{\mathbb{S}^{p-1}}\int_{[0,1]}  (|s_{A}(u,\alpha) - s_{C}(u,\alpha)| + |s_{B}(u,\alpha) - s_{C}(u,\alpha)| - |s_{A}(u,\alpha) - s_{B}(u,\alpha)|) dud\alpha = 0
		$$
		
		The integrand in the last expression is non-negative due to the triangle inequality. Thus, there exists a dense subset $D\subseteq\mathbb{S}^{p-1}\times [0,1]$ with $(\upsilon\otimes\nu)$-measure $1$  such that
		$$
		|s_{A}(u,\alpha) - s_{B}(u,\alpha)| = |s_{A}(u,\alpha) - s_{C}(u,\alpha)| + |s_{B}(u,\alpha) - s_{C}(u,\alpha)|
		$$
		for every $(u,\alpha)\in D$. By the necessary conditions for the triangle inequality to be an equality, for every $(u,\alpha)\in D$ there exists $\lambda_{u,\alpha}\in [0,1]$ such that
		$$
		s_{A}(u,\alpha) - s_{C}(u,\alpha) = \lambda_{u,\alpha}\cdot (s_{C}(u,\alpha) + s_{B}(u,\alpha))
		$$
		After some algebra,
		\begin{equation}\label{ecuacionC}
			s_{C}(u,\alpha) = \cfrac{1}{1+\lambda_{u,\alpha}}\cdot(s_{A}(u,\alpha) + \lambda_{u,\alpha}\cdot s_{B}(u,\alpha))
		\end{equation}
		Setting $r(u,\alpha) = 1/(1+\lambda_{u,\alpha})$, 
		\begin{equation}
			\begin{aligned}\nonumber
				&\text{E}[\rho_{1}(C,\mathcal{X})] = \int_{\mathbb{S}^{0}\times [0,1]}\text{E}[|s_{C}(u,\alpha) - s_{\mathcal{X}}(u,\alpha)|]dud\alpha = \\ \nonumber
				&\int_{\mathbb{S}^{0}\times [0,1]}\text{E}[|r(u,\alpha)\cdot s_{A}(u,\alpha) + (1-r(u,\alpha))\cdot s_{B}(u,\alpha) - s_{\mathcal{X}}(u,\alpha)|]dud\alpha  = \\ \nonumber
				&\int_{\mathbb{S}^{0}\times [0,1]}\text{E}[|r(u,\alpha)\cdot(s_{A}(u,\alpha) - s_{\mathcal{X}}(u,\alpha)) + (1-r(u,\alpha))\cdot (s_{B}(u,\alpha) - s_{\mathcal{X}}(u,\alpha))|]dud\alpha\leq\\ \nonumber
				&\int_{\mathbb{S}^{0}\times [0,1]}r(u,\alpha)\cdot\text{E}[|s_{A}(u,\alpha) - s_{\mathcal{X}}(u,\alpha)|] + (1-r(u,\alpha))\cdot\text{E}[|s_{B}(u,\alpha) - s_{\mathcal{X}}(u,\alpha)|] dud\alpha\leq\\ \nonumber
				&\int_{\mathbb{S}^{0}\times [0,1]}\text{E}[|s_{B}(u,\alpha) - s_{\mathcal{X}}(u,\alpha)|]dud\alpha = \text{E}[\rho_{1}(B,\mathcal{X})]
			\end{aligned}
		\end{equation}
		Since the function $s_{\mathcal{X}}$ is jointly measurable in its three arguments \cite[Lemma 4]{Kra} the first identity is due to Fubini's theorem and \ref{rhor}. The second one is due to \eqref{ecuacionC}, the second inequality to the triangle inequality, and the last identity to \eqref{minimomediana}. Thus $D_{1}(C;\mathcal{X})\leq D_{1}(B;\mathcal{X})$.
	\end{proof}

The next result is known and characterizes the set of medians of a real random variable as the numbers which maximize Tukey depth. It is very useful for the proof of Theorem \ref{teoremaTukey1mediana}.

\begin{lemma}\label{teoremasupremoTukey}
	Let $X$ be a real random variable. Then, $x\in\med(X)$ if, and only if, $x\in\arg\max_{y\in\mathbb{R}} HD(y;X)$.
\end{lemma}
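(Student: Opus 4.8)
The plan is to exploit the explicit one-dimensional form of the halfspace depth, $HD(y;X)=\min(\mathbb{P}(X\leq y),\mathbb{P}(X\geq y))$, together with the classical description of the median set: $x\in\med(X)$ if and only if $\mathbb{P}(X\leq x)\geq 1/2$ and $\mathbb{P}(X\geq x)\geq 1/2$. I would prove the two inclusions separately, relying only on the monotonicity of the tail probabilities. No appeal to the fuzzy machinery is needed, since $HD$ here is just the univariate halfspace depth, so I would keep the write-up short.

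For the inclusion $\med(X)\subseteq\arg\max_y HD(y;X)$, suppose $x\in\med(X)$. Then both $\mathbb{P}(X\leq x)\geq 1/2$ and $\mathbb{P}(X\geq x)\geq 1/2$, so $HD(x;X)\geq 1/2$. For an arbitrary $y>x$, the inclusion $\{X\geq y\}\subseteq\{X>x\}$ gives $\mathbb{P}(X\geq y)\leq\mathbb{P}(X>x)=1-\mathbb{P}(X\leq x)\leq 1/2$, whence $HD(y;X)\leq\mathbb{P}(X\geq y)\leq 1/2\leq HD(x;X)$; the case $y<x$ is symmetric, using $\mathbb{P}(X\leq y)\leq\mathbb{P}(X<x)=1-\mathbb{P}(X\geq x)\leq 1/2$. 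Hence $x$ maximizes $HD(\cdot;X)$.

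For the converse, I would first note that the median set is always non-empty, so the inclusion just proved already guarantees that the maximal depth value $M=\max_{y}HD(y;X)$ satisfies $M\geq 1/2$. Consequently, if $x\in\arg\max_y HD(y;X)$ then $\min(\mathbb{P}(X\leq x),\mathbb{P}(X\geq x))=HD(x;X)=M\geq 1/2$, which forces both $\mathbb{P}(X\leq x)\geq 1/2$ and $\mathbb{P}(X\geq x)\geq 1/2$, i.e. $x\in\med(X)$.

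The main obstacle is a bookkeeping issue rather than a conceptual one: handling atoms of $X$. The inequalities $\mathbb{P}(X\geq y)\leq\mathbb{P}(X>x)$ for $y>x$ and $\mathbb{P}(X\leq y)\leq\mathbb{P}(X<x)$ for $y<x$ must be stated with the correct strict versus non-strict inequalities (equivalently, using the left and right limits of the distribution function) so that the argument remains valid when $X$ places positive mass at isolated points. Once this care is taken, the statement follows as an elementary and classical fact.
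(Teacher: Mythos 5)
Your proof is correct, and in one direction it is genuinely tidier than the paper's. The converse direction (maximizer implies median) is essentially identical in both arguments: any median has depth at least $1/2$, so a maximizer's depth is at least $1/2$, which forces both tail probabilities $\mathbb{P}(X\le x)$ and $\mathbb{P}(X\ge x)$ to be at least $1/2$; the paper phrases this as a proof by contradiction, you as a direct contrapositive, which is the same logic. In the forward direction (median implies maximizer) your routes diverge: you bound $HD(y;X)\le\mathbb{P}(X\ge y)\le\mathbb{P}(X>x)=1-\mathbb{P}(X\le x)\le 1/2\le HD(x;X)$ for every $y>x$, and symmetrically for $y<x$, so a median dominates every other point in one stroke. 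The paper instead describes the median set as an interval $[x_0,x_1]$, splits into the cases $x_0=x_1$ and $x_0<x_1$, and argues that $\mathbb{P}\bigl(X\in(x_0,x_1)\bigr)=0$ so all medians share the same depth; it then leaves the combination with the first part somewhat implicit (the phrase ``the proof is done'' in the unique-median case tacitly uses that every non-median has depth strictly below $1/2$, a fact only exhibited inside the contradiction argument). Your monotonicity bound establishes precisely that quantitative fact --- $HD(y;X)\le 1/2$ for every $y$ other than the given median --- and so handles both of the paper's cases uniformly, without needing the structure of the median interval; what the paper's version buys in exchange is the side observation that depth is constant on the median set, which is informative but not needed for the lemma. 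Your care with strict versus non-strict inequalities at atoms (using $\{X\ge y\}\subseteq\{X>x\}$ for $y>x$) is exactly what keeps the argument valid for discrete distributions, so there is no gap.
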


\begin{proof} 
	Let $X$ be a real random variable, and $x\in\mathbb{R}$ such that $x\in\arg\max_{y\in\mathbb{R}} HD(y;X)$. We assume by contradiction that $x\not\in\Med(X)$. Without loss of generality, we suppose that $\mathbb{P}(X\leq x) < 1/2$. Let $z\in\Med(X)$. Then
	\begin{equation}
		HD(x;X) = \min\{\mathbb{P}(X\leq x), \mathbb{P}(X\geq x)\} < \cfrac{1}{2}\leq HD(z;X),
	\end{equation}
	which is a contradiction.
	
	Now let $x\in\Med(X)$. We must prove that $x\in\arg\max_{y\in\mathbb{R}} HD(y;\mathbb{R})$. Consider $x_{0} = \inf\{y\in\mathbb{R} : \mathbb{P}(X\leq y)\geq 1/2\}$ and $x_{1} = \inf\{y\in\mathbb{R} : \mathbb{P}(X\geq y)\geq 1/2\}$. By definition, $\mathbb{P}(X\leq x_{0})\geq 1/2$ and $\mathbb{P}(X\geq x_{1})\geq 1/2$. The interval $[x_{0},x_{1}]$ is the set of medians of $X$. If $x_{0} = x_{1}$, the median of $X$ is unique and the proof is done. If $x_{0} < x_{1}$, the median is not unique,	but $\mathbb{P}(X\in (x_{0},x_{1})) = 0$ and every median has the same halfspace depth. Hence, $x\in\arg\max_{y\in\mathbb{R}} HD(y;X)$.
\end{proof}

\begin{proof}[Proof of Theorem \ref{teoremaTukey1mediana}]
	Let $A\in\Med_s(\mathcal X)$, i.e., $s_{A}(u,\alpha)\in\Med(s_{\mathcal{X}}(u,\alpha))$ for all $u\in\mathbb{S}^{0}$ and $\alpha\in [0,1]$. Let us show $A\in\Med(\mathcal X, D_{FT})$, that is, $A$ maximizes $D_{FT}(\cdot;\mathcal{X})$. By \citet{TukeySMPS}, we can express $D_{FT}(A;\mathcal{X})$ as 
	\begin{equation}\nonumber
		D_{FT}(A;\mathcal{X}) = \inf_{u,\alpha} HD(s_{A}(u,\alpha),s_{\mathcal{X}}(u,\alpha)),
	\end{equation}
	where $HD$ is the Tukey depth for the multivariate case ($\mathbb{R}$ in this case). For any arbitrary $B\in\mathcal{F}_{c}(\mathbb{R})$,
	\begin{equation}
		\begin{aligned}\nonumber
			D_{FT}(A;\mathcal{X}) &= \inf_{u,\alpha} HD(s_{A}(u,\alpha),s_{\mathcal{X}}(u,\alpha)) = \inf_{u,\alpha}\sup_{y\in\mathbb{R}} HD(y;s_{\mathcal{X}}(u,\alpha))\geq \\ \nonumber
			&\sup_{y\in\mathbb{R}}\inf_{u,\alpha} HD(y;s_{\mathcal{X}}(u,\alpha))\geq \inf_{u,\alpha} HD(s_{B}(u,\alpha);s_{\mathcal{X}}(u,\alpha))
			=D_{FT}(B;\mathcal X),
		\end{aligned}
	\end{equation}
	where the second identity uses Lemma \ref{teoremasupremoTukey} and the fact that $A\in\Med_s(\mathcal X)$. That proves $\mathcal M_{\mathcal X}\subset \Med(\mathcal X;D_{FT})$.

	For the converse inclusion, let $A\in \Med(\mathcal X;D_{FT})$. Reasoning by contradiction, assume there exist $u\in\mathbb{S}^{0}, \alpha\in [0,1]$ such that $s_{A}(u,\alpha)\not\in\Med(s_{\mathcal{X}}(u,\alpha))$. Without loss of generality, assume
	$$\mathbb{P}(s_{\mathcal{X}}(u,\alpha)\leq s_{A}(u,\alpha)) < 1/2.$$
	From the definition of Tukey depth for fuzzy sets, $D_{FT}(A;\mathcal{X}) < 1/2$. By  \eqref{teoremaBea}, there exists $\med_{Si}(\mathcal{X})\in\Med_{1}(\mathcal{X}) $ such that $s_{\med_{Si}}(u,\alpha)\in\Med (s_{\mathcal{X}}(u,\alpha))$ whenever $u\in\mathbb{S}^{0}$, $\alpha\in [0,1]$. Using the first part of the proof, $D_{FT}(\med_{Si}(\mathcal{X});\mathcal{X})\geq 1/2$. Thus
	\begin{equation}\nonumber
		D_{FT}(A;\mathcal{X})  < 1/2\leq D_{FT}(\med_{Si}(\mathcal{X});\mathcal{X}),
	\end{equation}
	yields to a contradiction to the fact that $A$ maximizes $D_{FT}(\cdot;\mathcal X)$.
\end{proof}

\begin{proof}[Proof of Theorem \ref{teoremaProjection1mediana}]
	By the assumption, the support median is unique and so
	$$\Med_s(\mathcal X)=\{\med_{Si}(\mathcal X)\}=\{\med_{Gr}(\mathcal X)\}.$$
	Since $\mathcal{X}$ is non-degenerate, $\text{MAD}(s_{\mathcal{X}}(u,\alpha)) > 0$ whenever $u\in\mathbb{S}^{0}$, $\alpha\in [0,1]$. Thus $$O(\med_{Si}(\mathcal{X});\mathcal{X}) = 0$$
	As $O(U;\mathcal{X})\geq 0$ for all $U\in\mathcal{F}_{c}(\mathbb{R})$,
	$$1 = D_{FP}(\med_{Si}(\mathcal{X});\mathcal{X}) = \sup_{U\in\mathcal{F}_{c}(\mathbb{R})} D_{FP}(U;\mathcal{X}).$$
	That proves $\{\med_{Si}(\mathcal{X})\}\subseteq\Med(\mathcal{X} ;D_{FP})$.
	
	Now let $B\in\mathcal{F}_{c}(\mathbb{R})$ such that $B\neq \med_{Si}(\mathcal{X})$. It implies that there exists $u\in\mathbb{S}^{0}$ and $\alpha\in [0,1]$ such that $s_{B}(u,\alpha)\neq s_{\med_{Si}(\mathcal{X})}(u,\alpha) = \med(s_{\mathcal{X}}(u,\alpha))$. Thus $|s_{B}(u,\alpha) - \med(s_{\mathcal{X}}(u,\alpha))| > 0$ and $O(B;\mathcal{X}) > 0$. It implies that $D_{FP}(B;\mathcal{X}) < D_{FP}(\med_{Si}(\mathcal{X});\mathcal{X}) = 1$ and $\Med(\mathcal{X} ;D_{FP}) = \Med_{s}(\mathcal X) = \{\med_{Si}(\mathcal{X})\} = \{\med_{Gr}(\mathcal{X})\}$.
\end{proof}

The following is a basic fact about fuzzy numbers.

\begin{lemma}\label{lemmadifferentfuzzy}
	Let $A,B\in\mathcal{F}_{c}(\mathbb{R})$ and set
	$$C_{A,B} = \{(u,\alpha)\in\mathbb{S}^{0}\times [0,1] : s_{A}(u,\alpha)\neq s_{B}(u,\alpha)\}.$$
	If $A\neq B$, then $(\upsilon\otimes\nu) (C_{A,B}) > 0$, where $\upsilon$ denotes the uniform distribution measure over $\mathbb{S}^{0}$ and $\nu$ denotes the Lebesgue measure over $[0,1]$.
\end{lemma}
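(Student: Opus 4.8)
The plan is to use that a fuzzy number is completely determined by its support function, turning the hypothesis $A\neq B$ into the existence of a single point of disagreement between $s_A$ and $s_B$, and then to upgrade that isolated point into a whole interval of disagreement by exploiting the one-sided continuity of the support function in the level $\alpha$.

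First I would argue that $A\neq B$ forces $s_A(u_0,\alpha_0)\neq s_B(u_0,\alpha_0)$ for some $(u_0,\alpha_0)\in\mathbb{S}^0\times[0,1]$. Indeed, if the membership functions differ, say $A(x)>B(x)$ at some $x\in\R$, then any level $\alpha$ with $B(x)<\alpha\le A(x)$ satisfies $x\in A_\alpha\setminus B_\alpha$, so $A_\alpha\neq B_\alpha$; since for $p=1$ the support function records the endpoints of the $\alpha$-cuts (namely $s_A(1,\alpha)=\sup A_\alpha$ and $s_A(-1,\alpha)=-\inf A_\alpha$), this yields a direction $u_0\in\mathbb{S}^0$ with $s_A(u_0,\alpha)\neq s_B(u_0,\alpha)$.

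Next I would fix such a $(u_0,\alpha_0)$ and set $g(\alpha)=s_A(u_0,\alpha)-s_B(u_0,\alpha)$ for $\alpha\in[0,1]$. As used in the proof of Theorem \ref{Teorema1Mediana}, by \cite{MingSupport} the map $\alpha\mapsto s_U(u_0,\alpha)$ is left-continuous on $(0,1]$ and right-continuous at $0$ for every $U\in\mathcal{F}_c(\R)$; hence so is $g$. Since $g(\alpha_0)\neq 0$, one-sided continuity supplies $\delta>0$ such that $g$ does not vanish on an interval $I\subseteq[0,1]$ of positive length having $\alpha_0$ as an endpoint: take $I=(\alpha_0-\delta,\alpha_0]$ if $\alpha_0>0$ and $I=[0,\delta)$ if $\alpha_0=0$. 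Consequently $\{u_0\}\times I\subseteq C_{A,B}$, and therefore $(\upsilon\otimes\nu)(C_{A,B})\ge\upsilon(\{u_0\})\,\nu(I)=\tfrac12\,\nu(I)>0$, as claimed.

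The only genuinely nontrivial step is this last measure-theoretic upgrade: a single disagreement point $(u_0,\alpha_0)$ has $(\upsilon\otimes\nu)$-measure zero, so the argument must produce a full subinterval of disagreement rather than just a point. This is exactly what the left-/right-continuity of $s_U(u_0,\cdot)$ guarantees; everything else is the standard $\alpha$-cut representation of fuzzy numbers together with the product structure of $\upsilon\otimes\nu$.
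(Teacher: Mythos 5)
Your proof is correct, and it rests on the same key ingredient as the paper's proof --- the one-sided continuity in $\alpha$ of $s_U(u,\cdot)$ from \cite{MingSupport} --- but it runs in the opposite logical direction. The paper argues by contradiction: assuming $(\upsilon\otimes\nu)(C_{A,B})=0$, it observes that the agreement set $S=\mathbb{S}^0\times[0,1]\setminus C_{A,B}$ has full measure and is therefore dense, approximates a disagreement point $(1,\alpha)$ by a monotone sequence inside $S$, and uses left-continuity (right-continuity at $\alpha=0$) to force $s_A(1,\alpha)=s_B(1,\alpha)$, a contradiction. You instead argue directly: a single disagreement point --- whose existence you also justify more explicitly, via membership functions and $\alpha$-cuts, where the paper simply asserts $C_{A,B}\neq\emptyset$ from the identification of a fuzzy set with its support function --- is upgraded by the same one-sided continuity to a one-sided interval $I$ of disagreement, giving the explicit bound $(\upsilon\otimes\nu)(C_{A,B})\ge\tfrac12\,\nu(I)>0$. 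Your direct version avoids the detour through ``full measure implies dense'' and yields a quantitative lower bound, while the paper's contrapositive formulation mirrors the density-plus-continuity argument it reuses in the proof of Theorem \ref{Teorema1Mediana}; mathematically the two are equivalent and equally rigorous.
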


\begin{proof}
	Let $A,B\in\mathcal{F}_{c}(\mathbb{R})$ be two distinct fuzzy sets, thus $C_{A,B}\neq\emptyset$. Reasoning by contradiction, assume $(\upsilon\otimes\nu)(P) = 0$. Let $S = \mathbb{S}^{0}\times [0,1]\setminus C_{A,B}$ be the set of all $(u,\alpha)$ where $s_{A}(u,\alpha) = s_{B}(u,\alpha)$. Since $(\upsilon\otimes\nu)(S) = (\upsilon\otimes\nu)(\mathbb{S}^{0}\times [0,1])$, the set $S$ is dense in $\mathbb{S}^{0}\times [0,1]$. Set $S_{1} = (\{1\}\times [0,1])\cap S$ and $S_{-1} = (\{-1\}\times [0,1])\cap S$. Clearly, $S_{1}$ is dense in $\{1\}\times [0,1]$ and $S_{-1}$ is dense in $\{-1\}\times [0,1]$.
	
	Since $C_{A,B}$ is non-empty, let $(u,\alpha)\in C_{A,B}$. We will prove the case $u=1$, the other one being analogous. We distinguish two cases.
	
	\textbf{Case I. $(\alpha > 0)$.} As $S_{1}$ is dense in $\{1\}\times [0,1]$, there exists a monotone increasing sequence $\{\alpha_{n}\}_{n}$ such that $\lim_{n}\alpha_{n} = \alpha$ and $(1,\alpha_{n})\in S$ for all $n\in\mathbb{N}$. The support function is left-continuous with respect to $\alpha$ (see \cite{MingSupport}), whence
	$$s_{B}(1,\alpha) = \lim_{n\rightarrow\infty} s_{B}(1,\alpha_{n}) = \lim_{n\rightarrow\infty} s_{A}(1,\alpha_{n}) = s_{A}(1,\alpha),$$
	where the second equality is due to $(1,\alpha_{n})\in S$ for every $n\in\mathbb{N}$. Thus $(1,\alpha)\in S$, which is a contradiction.
	
	\textbf{Case II. $(\alpha = 0)$.} In this case, there exists a monotone decreasing sequence $\{\alpha_{n}\}_{n}$ such that $\lim_{n}\alpha_{n} = 0$ and $(1,\alpha_{n})\in S$ for all $n\in\mathbb{N}$. The support function of a fuzzy set is continuous at $\alpha = 0$ (see \cite{MingSupport}), thus
	$$s_{B}(1,0) = \lim_{n\rightarrow\infty} s_{B}(1,\alpha_{n}) = \lim_{n\rightarrow\infty} s_{A}(1,\alpha_{n}) = s_{A}(1,0),$$
	like in Case I, thereby reaching a contradiction.
\end{proof}

The next result is proved in \cite{simplicialsegundo} and it is used in the proof of Theorem \ref{teoremaSimplicial1mediana}.

\begin{lemma}(\cite[Proposition 5.1]{simplicialsegundo})\label{integrandoDFS}
	Let $\mathcal{X}\in L^{0}[\mathcal{F}_{c}(\mathbb{R}^{p})]$, $U\in\mathcal{F}_{c}(\mathbb{R}^{p})$ and $(u,\alpha)\in\mathbb{S}^{p-1}\times [0,1]$. %
	\begin{equation*}
		\begin{aligned}
			\mathbb{P}(s_{U}(u,\alpha)\in [m(u,\alpha), M(u,\alpha&)]) = 1 - [1 - F_{u,\alpha}(s_{U}(u,\alpha))]^{p+1} \\ &- [F_{u,\alpha}(s_{U}(u,\alpha))-\mathbb{P}(s_{\mathcal{X}}(u,\alpha) = s_{U}(u,\alpha))]^{p+1}.
		\end{aligned}
	\end{equation*}
	If, additionally, $\mathcal{X}\in C^{0}[\mathcal{F}_{c}(\mathbb{R}^{p})],$
	\begin{equation*}
		\mathbb{P}(s_{U}(u,\alpha)\in [m(u,\alpha), M(u,\alpha)]) = 1 - [1 - F_{u,\alpha}(s_{U}(u,\alpha))]^{p+1} - [F_{u,\alpha}(s_{U}(u,\alpha))]^{p+1}.
	\end{equation*}
\end{lemma}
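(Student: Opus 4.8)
The plan is to reduce the statement, which is phrased in terms of support functions of fuzzy sets, to an elementary computation about finitely many i.i.d.\ real random variables. First I would fix $(u,\alpha)\in\mathbb{S}^{p-1}\times[0,1]$ and $U\in\mathcal{F}_{c}(\mathbb{R}^{p})$, abbreviate $s:=s_{U}(u,\alpha)\in\mathbb{R}$, and set $Y_{i}:=s_{\mathcal{X}_{i}}(u,\alpha)$ for $i=1,\dots,p+1$. By the construction of $D_{mS}$ and $D_{FS}$ the fuzzy random variables $\mathcal{X}_{1},\dots,\mathcal{X}_{p+1}$ are independent copies of $\mathcal{X}$, so $Y_{1},\dots,Y_{p+1}$ are i.i.d.\ real random variables with the common distribution function $F_{u,\alpha}$ of $s_{\mathcal{X}}(u,\alpha)$; moreover $m(u,\alpha)=\min_{i}Y_{i}$ and $M(u,\alpha)=\max_{i}Y_{i}$. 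Thus the quantity to compute is $\mathbb{P}(m(u,\alpha)\le s\le M(u,\alpha))$.

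Next I would pass to the complementary event. Since the interval is closed, $s\notin[m(u,\alpha),M(u,\alpha)]$ holds if and only if either $s<\min_{i}Y_{i}$ or $s>\max_{i}Y_{i}$, that is, either $Y_{i}>s$ for every $i$ or $Y_{i}<s$ for every $i$. These two events are disjoint, because at least one variable exists and a single value cannot be simultaneously greater than and less than $s$. By independence and identical distribution,
\begin{equation*}
\mathbb{P}(Y_{i}>s \text{ for all } i)=[\mathbb{P}(s_{\mathcal{X}}(u,\alpha)>s)]^{p+1}=(1-F_{u,\alpha}(s))^{p+1},
\end{equation*}
while, using $\mathbb{P}(s_{\mathcal{X}}(u,\alpha)<s)=F_{u,\alpha}(s)-\mathbb{P}(s_{\mathcal{X}}(u,\alpha)=s)$,
\begin{equation*}
\mathbb{P}(Y_{i}<s \text{ for all } i)=\left(F_{u,\alpha}(s)-\mathbb{P}(s_{\mathcal{X}}(u,\alpha)=s)\right)^{p+1}.
\end{equation*}
Subtracting the sum of these two probabilities from $1$ yields exactly the first displayed identity, with $s=s_{U}(u,\alpha)$ and $\mathbb{P}(s_{\mathcal{X}}(u,\alpha)=s_{U}(u,\alpha))$ the relevant atom.

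For the continuous case the only extra ingredient is that when $\mathcal{X}\in C^{0}[\mathcal{F}_{c}(\mathbb{R}^{p})]$ the variable $s_{\mathcal{X}}(u,\alpha)$ is continuous, so $\mathbb{P}(s_{\mathcal{X}}(u,\alpha)=s_{U}(u,\alpha))=0$; substituting this into the general formula collapses it to the second displayed identity. The argument is entirely elementary, so there is no substantial obstacle; the only point requiring care is the boundary bookkeeping, namely keeping strict and non-strict inequalities separate so that the point mass $\mathbb{P}(s_{\mathcal{X}}(u,\alpha)=s_{U}(u,\alpha))$ is correctly attributed to the event $\{Y_{i}<s \text{ for all } i\}$ rather than double-counted. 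This is precisely what distinguishes the general identity from the continuous one.
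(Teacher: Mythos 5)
Your argument is correct and is essentially the same as the paper's route: the paper states this lemma without proof, importing it verbatim from \cite[Proposition 5.1]{simplicialsegundo}, and the computation behind that result is exactly your decomposition of the complement of $\{s_{U}(u,\alpha)\in[m(u,\alpha),M(u,\alpha)]\}$ into the two disjoint events that all $p+1$ i.i.d.\ projections $s_{\mathcal{X}_{i}}(u,\alpha)$ lie strictly above, respectively strictly below, $s_{U}(u,\alpha)$, with the atom handled via $\mathbb{P}(Y<s)=F_{u,\alpha}(s)-\mathbb{P}(Y=s)$ and set to zero in the $C^{0}$ case. Incidentally, you correctly read $m(u,\alpha)$ and $M(u,\alpha)$ as the minimum and maximum of $s_{\mathcal{X}_{1}}(u,\alpha),\dots,s_{\mathcal{X}_{p+1}}(u,\alpha)$, silently repairing the notational slip in the display \eqref{ms}.
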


\begin{proof}[Proof of Theorem \ref{teoremaSimplicial1mediana}]
	Let $\mathcal{X}\in C^{0}[\mathcal{F}_{c}(\mathbb{R})]$. In \cite{simplicialsegundo} we showed that, for any $U\in\mathcal{F}_{c}(\mathbb{R})$, $D_{mS}$ and $D_{FS}$ can be expressed as
	$$
	D_{mS}(U;\mathcal{X}) = \int_{\mathbb{S}^{0}}\int_{[0,1]}\mathbb{P}[s_{U}(u,\alpha)\in [m(u,\alpha),M(u,\alpha)]] d\nu(\alpha) d\mathcal{V}_{p}(u),
	$$
	and
	$$
	D_{FS}(U;\mathcal{X}) = \inf_{u\in\mathbb{S}^{0}}\int_{[0,1]}\mathbb{P}[s_{U}(u,\alpha)\in [m(u,\alpha),M(u,\alpha)]] d\nu(\alpha).
	$$
	Thus, by Lemma \ref{integrandoDFS} and the fact that $s_{\mathcal{X}}(u,\alpha)$ is a continuous real random variable for each $(u,\alpha)\in\mathbb{S}^{0}\times [0,1]$,
	\begin{equation}\label{DmSecuacion}
		D_{mS}(U;\mathcal{X}) = \int_{\mathbb{S}^{0}}\int_{[0,1]}2F_{u,\alpha}(s_{U}(u,\alpha))[1-F_{u,\alpha}(s_{U}(u,\alpha))]d\nu(\alpha) d\mathcal{V}_{p}(u),
	\end{equation}
	and
	\begin{equation}\label{DFSecaucion}
		D_{FS}(U;\mathcal{X}) = \inf_{u\in\mathbb{S}^{0}}\int_{[0,1]}2F_{u,\alpha}(s_{U}(u,\alpha))[1-F_{u,\alpha}(s_{U}(u,\alpha))]d\nu(\alpha).
	\end{equation}
	
	Now, we prove  $\Med_s(\mathcal X)\subseteq\Med(\mathcal X;D_{mS})$. Let $A$ be a support median. By  \eqref{teoremaBea} and the definition of support median, $\Med_s(\mathcal X)\neq\emptyset$ and there exists at least one support median. Because $s_{\mathcal{X}}(u,\alpha)$ is continuous, $F_{u,\alpha}(s_{A}(u,\alpha)) = 1/2$ for all $(u,\alpha)\in\mathbb{S}^{0}\times [0,1]$. Thus,  $s_{A}(u,\alpha)$ maximizes the integrand  in \eqref{DmSecuacion} for every $(u,\alpha)\in\mathbb{S}^{0}\times [0,1]$. Then $A\in\Med(\mathcal X;D_{mS})$. The proof for $D_{FS}$ is analogous.

	We prove that $\Med(\mathcal X;D_{mS})\subseteq\Med_{s}(\mathcal X)$, since $\Med_1(\mathcal X)=\Med_s(\mathcal X)$ (see Theorem \ref{Teorema1Mediana}), it is equivalent to prove $\Med(\mathcal X;D_{mS})\subseteq\Med_1(\mathcal X)$.
	Assume, reasoning by contradiction, that there exists $V\in\Med(\mathcal X;D_{mS})$ such that $V\not\in\Med_1(\mathcal X)$. Then
	\begin{equation}\label{ecuacion1medianaDFS}
		\text{E}[\rho_{1}(V;\mathcal{X})] > \inf_{U\in\mathcal{F}_{c}(\mathbb{R})}\text{E}[\rho_{1}(U;\mathcal{X})] = \text{E}[\rho_{1}(A;\mathcal{X})]
	\end{equation}
	The set $K = \{(u,\alpha)\in\mathbb{S}^{0}\times [0,1] : s_{V}(u,\alpha)\not\in Med(s_{\mathcal{X}}(u,\alpha))\}$ can be expressed as $K = \{(u,\alpha)\in\mathbb{S}^{0}\times [0,1] : F_{u,\alpha}(s_{V}(u,\alpha))\neq 1/2\}$. In \cite[Proposition 4.6]{jointPedroMiriam} it is proved that the support function of a fuzzy random variable is jointly measurable, thus $K$ is a Lebesgue measurable set. From \eqref{ecuacion1medianaDFS}, $(\upsilon\otimes\nu)(K) > 0$ (see the proof of Theorem \ref{Teorema1Mediana}). The integrand in \eqref{DmSecuacion} is maximized by the medians of $s_{\mathcal{X}}(u,\alpha)$, for each $(u,\alpha)\in\mathbb{S}^{0}\times [0,1]$. As $K$ has positive measure,
	$$
	D_{mS}(V;\mathcal{X}) < D_{mS}(A;\mathcal{X}) = \sup_{U\in\mathcal{F}_{c}(\mathbb{R})} D_{mS}(U;\mathcal{X})
	$$
	and $V\not\in\Med(\mathcal X;D_{mS})$, which is a contradiction.
	
	The proof for $D_{FS}$ is analogous.

\end{proof}

\section{Discussion}\label{discussion}

	We characterized the set of \textit{$1$-medians} as fuzzy sets such that their support functions at $(u,\alpha)$ are univariate medians of the support function of the fuzzy random number at $(u,\alpha)$ for any $(u,\alpha)\in\mathbb{S}^0\times [0,1]$. Thanks to this characterization, the set of \textit{$1$-medians} coincides with what we define as the set of \textit{support medians}, which contains the main pre-existing notions of median for fuzzy random numbers (for instance $\med_{Gr}$ in \cite{GregorMedian} and $\med_{Si}$ in \cite{medianfuzzy1}). Since the $\rho_1$ metric is a natural generalization to the fuzzy setting of the absolute value in $\mathbb{R}$, the notion of \textit{$1$-median} of a fuzzy random number in  \eqref{1medianfuzzynumber} is a natural generalization to fuzzy numbers of the univariate median for real random variables.

	In \cite{medianfuzzy1} it is shown that, for every random fuzzy number, there exists at least one fuzzy number which is a \textit{$1$-median} in the sense of  \eqref{1medianfuzzynumber}, denoted by $\med_{Si}$. The main result of this manuscript is characterize the set of fuzzy numbers which are \textit{$1$-medians} in the sense of \eqref{1medianfuzzynumber}, the set of \textit{support medians}, collected in Theorem \ref{Teorema1Mediana}.
	
	It is possible, in the univiariate case, that real random variables could have infinitely many medians, while in the fuzzy setting there exist different notions of unique median. The set of support medians contains, for every fuzzy random number, the medians $\med_{Gr}$ and $\med_{Si}$, indicating a good generalization of the concept of median for fuzzy random numbers. This set also has, in some cases, infinitely many other elements satisfying the minimization condition presented in \eqref{1medianfuzzynumber}.
	
	The notion of depth arises from the idea of ordering the elements of a space and generalizing the notion of median, as the element that maximizes the depth. There exists some proposals of depth in the fuzzy setting, such as the Tukey depth, $D_{FT}$, the fuzzy simplicial depths, $D_{mS}$ and $D_{FS}$, the projection depth, $D_{FP}$, or the $L^r$-type depths, $D_{r}.$ The second aim of this manuscript is to give a connection between the support medians of a fuzzy random number and the different proposals of depth present in the literature. 
	
	The main result of this manuscript states that the set of support medians and the set of $1$-medians coincide. It implies that the set of elements which maximize $D_1$ depth is the set of support medians (see Proposition \ref{D1support}). Theorem \ref{teoremaTukey1mediana} states that, for every fuzzy random number, the set of elements which maximize the Tukey depth, $D_{FT}$, is the set of support medians and therefore the set of $1$-medians. In the case of the projection depth, we may assume that the fuzzy random number is non-degenerated. In that case, Theorem \ref{teoremaProjection1mediana} assures that the unique element that maximizes $D_{FP}$ is $\med_{Si}$. When we consider the simplicial depths, $D_{mS}$ and $D_{FS}$, we must consider fuzzy random numbers in $C^0[\mathcal{F}_{c}(\mathbb{R}^p)]$. Theorem \ref{teoremaSimplicial1mediana} shows that the sets which maximize $D_{mS}$ and $D_{FS}$ is, exactly, the set of support medians. In the general case this results is not true, as it is shown in Example \ref{ejemplosimpli}.
	
	In this manuscript we give a solution for the minimization problem in \eqref{1medianfuzzynumber} and characterize the set of $1$-medians in terms of the depth proposals in the fuzzy setting. On the one hand, this characterization shows that depth functions in the fuzzy setting are good generalizations since one of the main goals of depth is to generalize the concept of median. On the other hand, $1$-medians of fuzzy random numbers could be computed via depth functions, avoiding the minimization problem in \eqref{1medianfuzzynumber}. 
	
	For future work, it could be interesting to develop some techniques to compute depth proposals, in order to deal with the set of $1$-medians, thanks to the above results which connect the notion of median with the notion of depth for the fuzzy setting. 
	
	As depth functions try to order the elements of a space with respect to a distribution, it is also interesting to study the generalization, to the fuzzy setting, of the notion of quantile via depth functions.

\vskip 1 true cm
{\bf Acknowledgments}
The authors are supported by grant PID2022-139237NB-I00 funded by MCIN/AEI/10.13039/501100011033 and “ERDF A way of making Europe”. Additionally, L. Gonz\'alez was supported by the Spanish Ministerio de Ciencia, Innovaci\'on y Universidades grant MTM2017-86061-C2-2-P. 
P. Ter\'an is also supported by the Ministerio de Ciencia, Innovación y Universidades grant PID2019-104486GB-I00.

\end{document}